\newtheorem{theorem}{Theorem}[section]
\newtheorem{lem}[theorem]{Lemma}
\theoremstyle{definition}
\newtheorem{defn}[theorem]{Definition}
\newtheorem{prop}[theorem]{Proposition}
\newtheorem{cor}[theorem]{Corollary}
\theoremstyle{remark}
\newtheorem{remark}[theorem]{Remark}
\numberwithin{equation}{section}
\newcommand{\ds}{\displaystyle}
\newcommand{\Trap}{\mathrm{Trap }}
\newcommand{\Int}[0]{\mathrm{Int}\hspace{2pt}}
\newcommand{\K}{\mathcal{K}}
\newcommand{\F}{\mathcal{F}}
\newcommand{\T}{\mathcal{T}}
\newcommand{\M}[0]{\mathcal{M}}
\newcommand{\Hid}{\mathcal{H}}
\newcommand{\Reals}{\mathbb{R}}
\newcommand{\Tvectors}{T}
\newcommand{\Fvectors}{F}
\newcommand{\that}{\widehat{t}}
\newcommand{\tch}{\widecheck{t}}
\newcommand{\h}{\textbf{h}}
\newcommand{\p}{\textbf{p}}
\newcommand{\minus}{\scriptscriptstyle{-}}
\newcommand{\plus}{\scriptscriptstyle{+}}
\newcommand{\focusA}{A}
\newcommand{\focusB}{B}
\newcommand{\bflow}{\phi}
\newcommand{\paramise}{parameterise\xspace}
\newcommand{\paramised}{parameterised\xspace}
\newcommand{\paramation}{parameterisation\xspace}
\begin{document}

\title[Unilluminable rooms]{Unilluminable rooms, billiards with hidden sets, and Bunimovich mushrooms}


\author[Paul Castle]{Paul Castle}
\address{Department of Mathematics, University of Western Australia, Perth WA 6907}
\email{paul@madgech.com}
\thanks{The author has previously published under the name Paul Wright}


\subjclass[2010]{37D50, 58J50, 78A05, 78A46}

\date{\today}


\commby{Guofang Wei}

\begin{abstract}
	The illumination problem is a popular topic in recreational mathematics: In a mirrored room, is every region illuminable from every point in the region? So-called \enquote{unilluminable rooms} are related to \enquote{trapped sets} in inverse scattering, and to billiards with divided phase space in dynamical systems. In each case, a billiard with a semi-ellipse has always been put forward as the standard counterexample: namely the Penrose room, the Livshits billiard, and the Bunimovich mushroom respectively. In this paper, we construct a large class of planar billiard obstacles, not necessarily featuring ellipses, that have dark regions, hidden sets, or a divided phase space. The main result is that for any convex set $\Hid$, we can construct a convex, everywhere differentiable billiard table $K$ (at any distance from $\Hid$) such that trajectories leaving $\Hid$ always return to $\Hid$ after one reflection. This billiard generalises the Bunimovich mushroom. As corollaries, we give more general answers to the illumination problem and the trapped set problem. We use recent results from nonsmooth analysis and convex function theory, to ensure that the result applies to all convex sets. 
\end{abstract}

\maketitle

\section{Introduction}

In this paper we consider three closely related problems in optics and dynamical billiards:

\begin{enumerate}
	\item \label{illumination problem} \emph{The illumination problem}: In a mirrored room (or closed billiard), is every region illuminable from a candle placed at any point in the room?
	\item \label{trapped set problem} \emph{The trapped set problem}: Does the scattering kernel of an open billiard determine the shape of a billiard obstacle?
	\item \label{divided phase space} \emph{Divided phase space}: Which closed billiards have a phase space divided into isolated components?
\end{enumerate}
All three problems have similar answers involving a semi-ellipse. They use the property that any billiard trajectory between the two focii will be reflected by the ellipse back through the focii. 

\subsection{Illumination problem}
The first question is thought to have been first asked in the 1950s by Straus \cite{klee1969every,chernov2003search}, and answered in the negative by Penrose \cite{penrose1958puzzles}. 

Penrose's solution uses a semi-elliptical room similar to \cref{fig:Livshits}(\textsc{a}) and (\textsc{b}). Variations of Penrose's solution with chains of ellipse-based rooms have been considered \cite{rauch1978illumination}. One variation of the question replaces the candle with a searchlight \cite{chernov2003search}. Other than ellipse-based answers, most work in this area has been on polygonal rooms. There are several polygonal counterexamples \cite{tokarsky1995polygonal, castro1997corrections}, which have two points that cannot illuminate each other. In a rational polygon, only finitely many points can remain dark \cite{lelievre2016everything}. The problem has been included on various lists of unsolved problems \cite{guy1971monthly,klee1979some,klee1991old}, and featured on popular recreational mathematics websites \cite{numberphile}.

\subsection{The trapped set problem}
The second question was answered in the negative by Livshits, whose counterexample \cref{fig:Livshits}(\textsc{a}) was published by Melrose in \cite{melrose1995geometric}. Inverse scattering is the problem of recovering the shape of an obstacle from its scattering kernel or scattering length spectrum \cite{stoyanov2000scattering, noakes2015travelling, noakes2015rigidity}.
The Livshits example demonstrates that there exist simply connected billiard obstacles with the property that some set of points is \emph{hidden} from the outside; that is, all trajectories through these points are \emph{trapped} and will never escape. This means that inverse scattering is impossible in this case: billiard trajectories cannot provide any information about the shape of the obstacle where it borders the hidden set. It is therefore interesting to know whether billiards with hidden sets are \enquote{common}, or if Livshits-like billiards are a special case. 

By rotating the semi-ellipse around an axis, one can construct billiards with trapped sets in any dimension \cite{noakes2016obstacles}. Stoyanov \cite{stoyanov2017santalo} showed that sufficiently small perturbations to a billiard obstacle only change the Liouville measure of the set of trapped trajectories Trap$(\hat{\Omega})$ by a small amount. However, this theorem says nothing about the set of hidden points. It is possible that a small perturbation to the Livshits billiard could remove a set of very small measure from the trapped set, while completely destroying the hidden set. 

\begin{figure}
	\centering
	\subcaptionbox{The original Livshits billiard.
		\label{fig:OriginalLivshits}}
	{\includegraphics[width=0.4\linewidth]{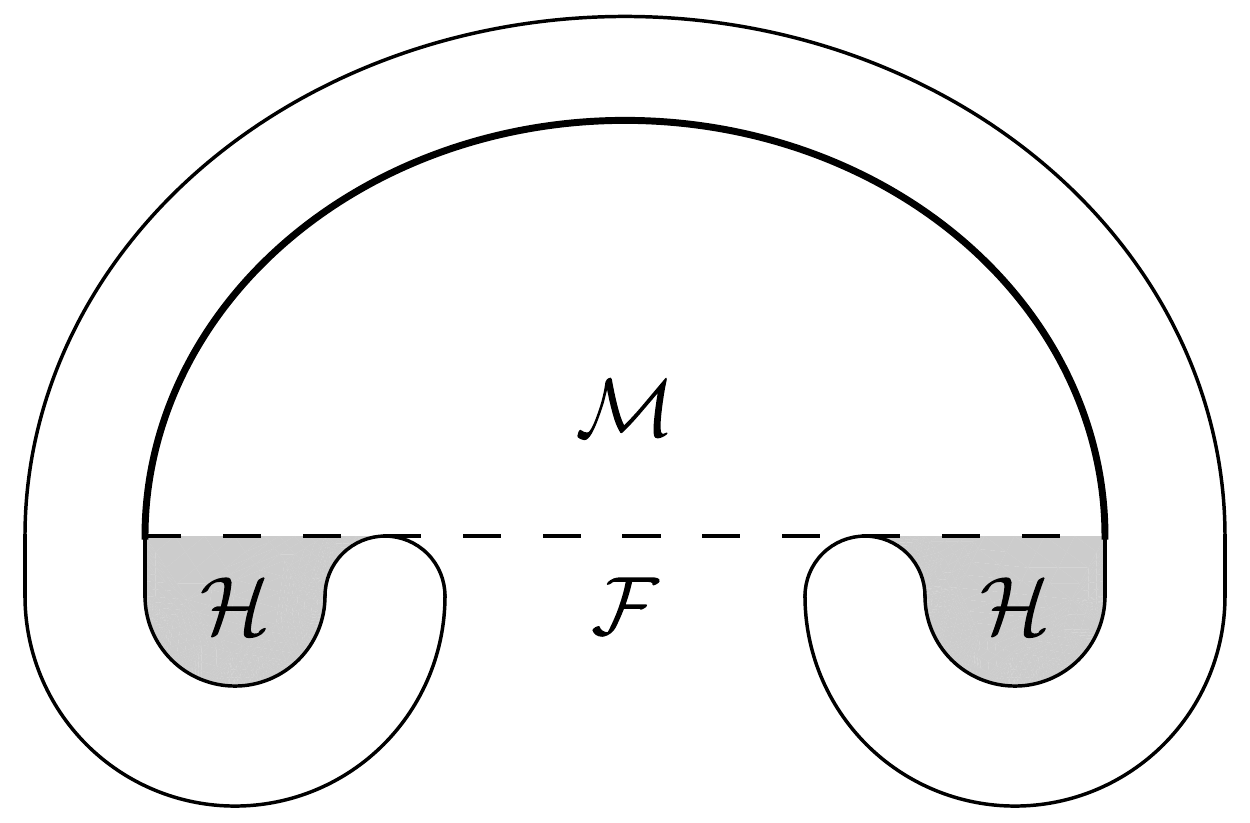}}
	\hspace{1cm}
	\subcaptionbox{\label{fig:NewLivshits}An alternative Livshits billiard.}
	{\includegraphics[width=0.4\linewidth]{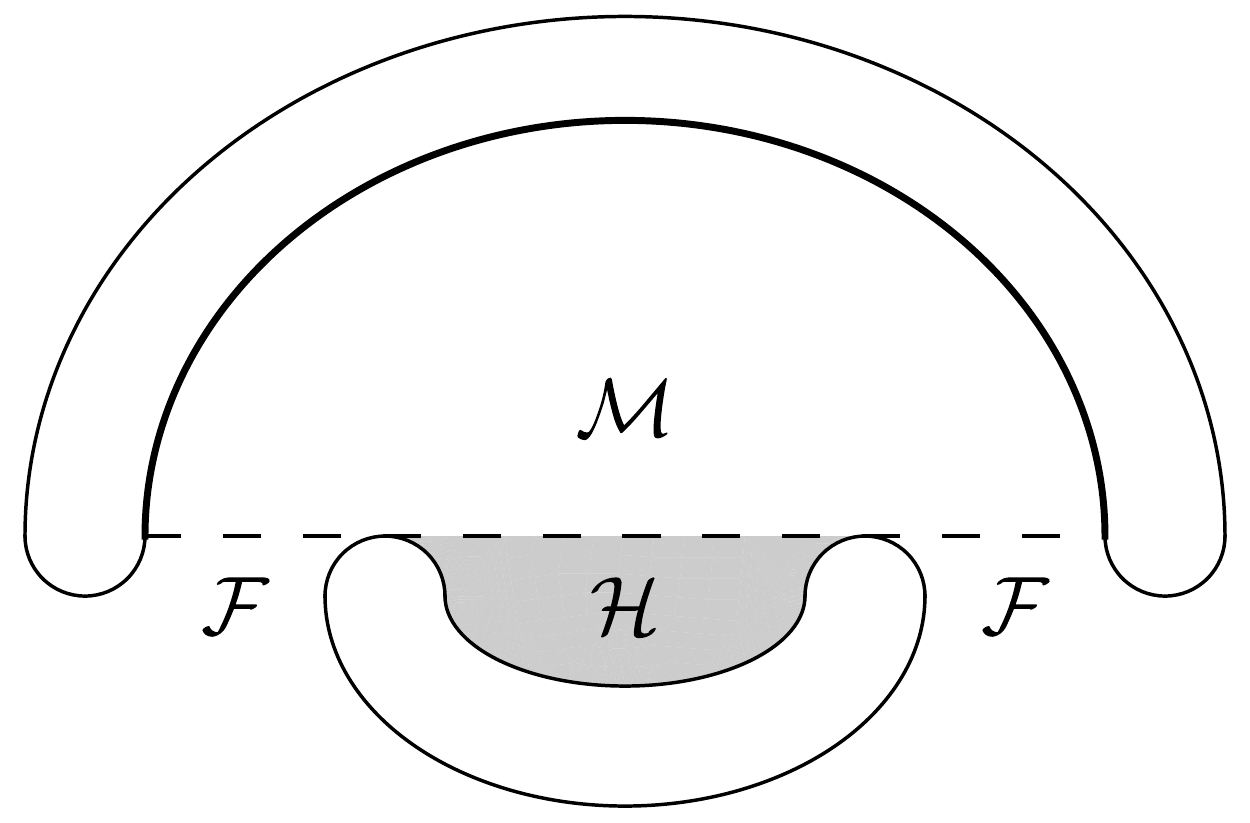}}
	\caption{Livshits billiards. The thick curve is a semi-ellipse. The rest of the curve is tangent to the major semiaxis at the focii.}\label{fig:Livshits}
\end{figure}

\subsection{Billiards with divided phase space}
Bunimovich \cite{bunimovich2001mushrooms} uses a similar semi-ellipse to construct closed billiards with multiple chaotic components and integrable islands, and calls these billiards \enquote{mushrooms}. These have been investigated in the field of quantum chaos \cite{barnett2007quantum, dietz2007spectral}. Bunimovich writes \enquote{Observe that we allowed here only semicircular and semielliptic hats\ldots perturbations of (semi) ellipses can be expected to provide a generic picture of Hamiltonian systems with divided phase space}. 

\section{Results}

Other than polygonal rooms, all of the above examples incorporate a semi-ellipse. A natural question is whether the semi-ellipse is essential for creating an unilluminable room, a hidden set, or a divided phase space in a smooth billiard. Another natural question is whether the boundary between light and dark regions is always a line segment. In this paper, we answer these questions by constructing a large, general class of planar obstacles with divided phase space, hidden sets, or unilluminable regions. 

First, for any convex set $\Hid$, we can construct a billiard around it that divides the phase space into disjoint components, one containing $\Hid$. Unlike the Penrose, Livshits and Bunimovich examples, these billiards do not necessarily use ellipses. Note that we make no assumptions about the smoothness of the set $\Hid$, beyond what is implied by the convexity. The main result is the following theorem:

\begin{theorem}\label{main theorem}
	Let $\Hid$ be a convex subset of $\Reals^2$ and let $\lambda > 0$. Then there exists a closed billiard $\K_\lambda$ surrounding $\Hid$ with the following properties:
	\begin{enumerate}
		\item $\|p -  q\| \geq \lambda$ for all $p \in K$, $q \in \Hid$.
		\item The boundary $\partial \K_\lambda$ is strictly convex, differentiable everywhere and twice differentiable almost everywhere.
		\item The phase space $\hat{\Omega}$ of the billiard flow inside $\K_\lambda$ is split into two disjoint subsets $\hat{\Omega} = \hat{\Omega}_1 \cup \hat{\Omega}_2$. Every trajectory in $\hat{\Omega}_1$ intersects $\Hid$ after every reflection, while every trajectory in $\hat{\Omega}_2$ never intersects $\Hid$. 
	\end{enumerate}
\end{theorem}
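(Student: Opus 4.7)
The plan is to take $\K_\lambda$ to be the level curve of a generalised string construction with $\Hid$ as caustic, so that $\Hid$ plays the role of the inner caustic of an integrable island in the resulting billiard. For $p$ in the exterior of $\Hid$, let $A(p), B(p) \in \partial\Hid$ be the contact points of the two supporting lines of $\Hid$ through $p$, and define
\[
\sigma(p) \;=\; |p - A(p)| \;+\; |p - B(p)| \;+\; \ell(A(p), B(p)),
\]
where $\ell(A,B)$ is the length of the arc of $\partial\Hid$ between $A$ and $B$ on the side not visible from $p$. Geometrically, $\sigma(p)$ is the length of a taut closed string wound once around $\Hid$ and pulled through $p$. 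I would take $\K_\lambda = \{p : \sigma(p) \leq L\}$, choosing $L$ sufficiently large that every point of the level set $\{\sigma = L\}$ lies at Euclidean distance at least $\lambda$ from $\Hid$; this is possible because $\sigma(p) \to \infty$ with the distance from $p$ to $\Hid$.

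Conditions (1) and (2) then reduce to $\sigma$ being convex on $\Reals^2 \setminus \Hid$ and $C^1$ with nonvanishing gradient. Strict convexity of $\K_\lambda$ and $C^1$ smoothness of $\partial\K_\lambda$ both follow from these, and twice-differentiability almost everywhere is Alexandrov's theorem applied to the convex function $\sigma$. The core computation is an envelope-theorem argument: the tangency condition characterising $A(p), B(p)$ is precisely the first-order optimality condition for the contact points of the taut string, so differentiating in $p$ with $A(p), B(p)$ held fixed yields
\[
\nabla\sigma(p) \;=\; \widehat{A(p)\,p} \;+\; \widehat{B(p)\,p},
\]
the sum of the two unit vectors from the contact points toward $p$. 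This is nonzero on $\Reals^2 \setminus \Hid$ and points along the angle bisector of the two tangent rays $\overrightarrow{pA(p)}, \overrightarrow{pB(p)}$ in the direction away from $\Hid$. Equivalently, the normal line to $\partial\K_\lambda$ at $p$ coincides with this bisector, which is the caustic/reflection property: a billiard chord from $p$ tangent to $\Hid$ reflects off $\partial\K_\lambda$ to another chord tangent to $\Hid$.

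The invariant decomposition (3) is then a one-line geometric consequence of the caustic property. Define the cone of visibility $C_p := \{\vec u : \{p + t\vec u : t>0\} \cap \Hid \neq \emptyset\}$; this is precisely the open angular wedge bounded by the rays $\overrightarrow{pA(p)}, \overrightarrow{pB(p)}$ and containing the inward bisector. By the previous paragraph this wedge is symmetric about the normal line to $\partial\K_\lambda$ at $p$. The billiard reflection at $p$, written in terms of the reversed incoming direction $-\vec u_{\mathrm{in}}$ and outgoing direction $\vec u_{\mathrm{out}}$, is exactly reflection across that normal line, so it preserves $C_p$ as a set and likewise preserves its complement in the half-plane of inward directions. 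Hence if the incoming chord passes through $\Hid$ — equivalently, $-\vec u_{\mathrm{in}} \in C_p$ — then so does the outgoing chord, and if the incoming chord avoids $\Hid$ then so does the outgoing one. Taking $\hat\Omega_1$ to be the trajectories whose chords always meet $\Hid$ and $\hat\Omega_2$ those whose chords never do yields the decomposition.

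The hard part is carrying out the above uniformly for an arbitrary convex $\Hid$, with no regularity beyond convexity. The contact map $p \mapsto (A(p), B(p))$ may be set-valued along flat portions of $\partial\Hid$, or may jump as $p$ crosses the normal direction from a corner of $\partial\Hid$, so the definition of $\sigma$ has to accommodate these degeneracies, and the envelope-theorem derivation of $\nabla\sigma$ together with the convexity, $C^1$-smoothness and strict convexity of $\partial\K_\lambda$ have to be justified by subdifferential calculus rather than classical smooth envelope methods. This is where the recent results from nonsmooth analysis and convex function theory flagged in the abstract are required; once those technical ingredients are in place, the geometric arguments of the previous paragraphs carry over essentially unchanged.
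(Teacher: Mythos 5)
Your proposal is correct and follows essentially the same route as the paper: the taut-string potential $\sigma$ is exactly the paper's $\varphi$, the gradient formula $\nabla\varphi = (\cos\theta_{\plus},\sin\theta_{\plus}) - (\cos\theta_{\minus},\sin\theta_{\minus})$ is your sum of unit vectors along the bisector, and the caustic/reflection argument for the phase-space split is the same. The paper likewise handles arbitrary convex $\Hid$ via nonsmooth analysis (Gowda's $H$-differentials and inverse function theorem in tangential coordinates) exactly where you flag that subdifferential calculus is needed.
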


\begin{proof}[Sketch of proof]
To visualise the construction of the billiard, we can use a variation of an idea called the \enquote{goat and silo problem} \cite{fraser1982tale}. Consider a goat wearing a harness, through which a rope can move back and forth freely. We use a rope of length $L + 2 \lambda$, where $L$ is the perimeter of the silo and $\lambda > 0$. The rope is then wrapped around a silo in the shape of the set $\Hid$, but not fixed at any point, so that the goat can walk around the silo, as in \cref{fig2}(\textsc{a}). The region that the goat can reach is then is exactly the billiard table $K_\lambda$ that satisfies the conditions in \cref{main theorem}. 
\end{proof}
The curve $\partial K_\lambda$ can be thought of as a generalization of the \emph{involute} of the curve $\partial \Hid$. Involutes have found applications in optics \cite{chaves2015introduction} and mechanics. This construction generalizes the Bunimovich mushroom in that it divides the phase space, although we would not expect the dynamics in these billiards to be integrable in general. The Bunimovich mushroom itself does not follow this construction. The main theorem has two corollaries that give stronger answers to the illumination and trapped set problems. 

\begin{figure}
	\centering
	\subcaptionbox{Example for \cref{main theorem}. The goat is attached to a rope wrapped around the convex set $\Hid$, and is confined to the region $K_\lambda$. \label{fig:Goat}}
	{\includegraphics[width = 0.45\textwidth]{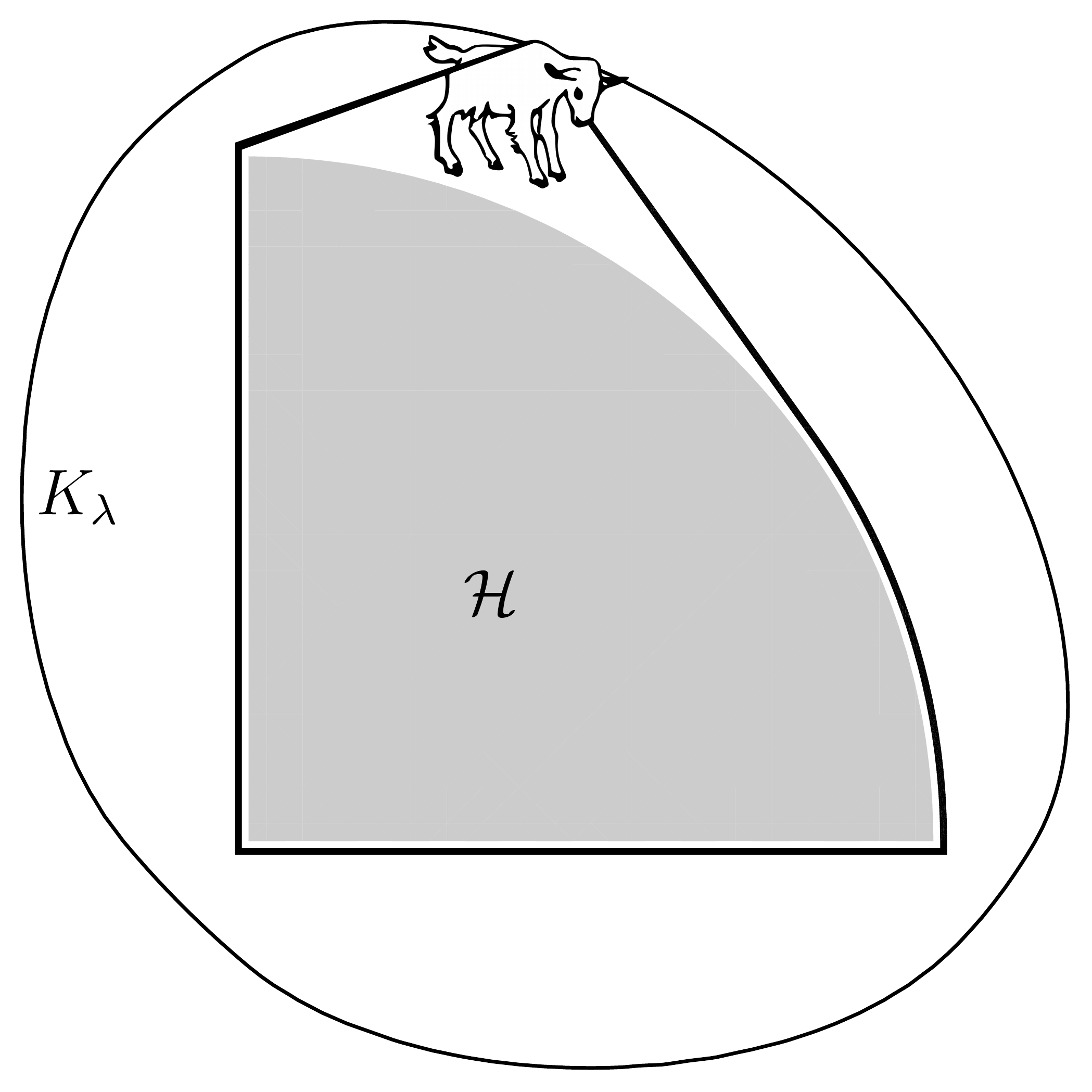}}
	\hspace{0.5cm}
	\subcaptionbox{Example for \cref{cor1}. The thick curve is determined by $\Hid$, while the rest of the boundary $\partial K_\lambda$ is arbitrary. \label{fig:CorExample}}
	{\includegraphics[width = 0.45\textwidth]{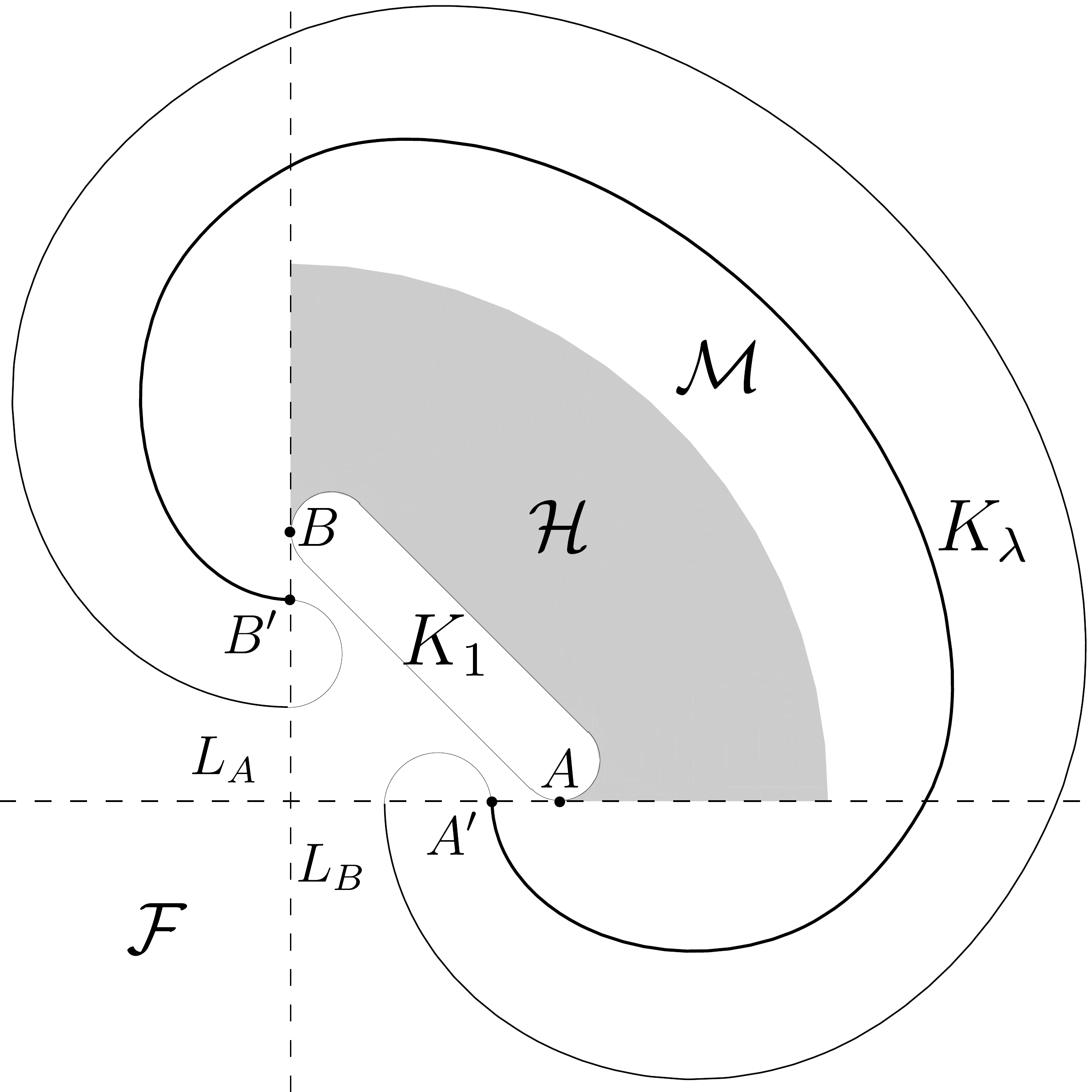}}
	\caption{\label{fig2}}
\end{figure}


\begin{cor}[Answer to the trapped set problem]\label{cor1}
	Let $\Hid$ be a convex subset of $\Reals^2$. Let $\focusA, \focusB$ be any two points on the boundary with tangent lines $L_A, L_B$. Let $K_1 \subset \Hid$ share the boundary with $\Hid$ between $\focusA$ and $\focusB$ (otherwise this component is arbitrary). Let $\lambda_{\max} = d(\Hid, L_\focusA \cap L_\focusB)$ if the lines intersect on the opposite side of $K_1$ from $\Hid$, and $\infty$ otherwise. Then for any $\lambda \in (0, \lambda_{\max})$, there exists a billiard obstacle $K_\lambda$, such that $d(\Hid, K_\lambda) \geq \lambda$, and $\Hid \backslash K_1$ is the hidden set for $K = K_1 \cup K_\lambda$. 
\end{cor}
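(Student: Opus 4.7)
The plan is to construct $K_\lambda$ by restricting the involute-type curve from the sketch of \cref{main theorem} (applied to the convex set $\Hid$) to the portion on the side of $\Hid$ opposite $K_1$, which I will call the \emph{open} side. Concretely, $\partial K_\lambda$ should trace the position of the goat when the rope runs from the goat on the open side, wraps tangentially around $\Hid$ along the $\focusA$--$\focusB$ arc bordered by $K_1$, and extends to a second tangent contact with $\partial \Hid$. The curve $\partial K_\lambda$ therefore begins at the point at distance $\lambda$ along the tangent line $L_\focusA$ on the open side, curves around, and terminates at the analogous point along $L_\focusB$, meeting $\partial \Hid$ tangentially at $\focusA$ and $\focusB$.

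First I would verify that this construction produces a strictly convex, everywhere differentiable curve at distance $\geq \lambda$ from $\Hid$ for every $\lambda \in (0, \lambda_{\max})$. Strict convexity, differentiability, and the distance bound all follow from the same involute analysis used in the proof of \cref{main theorem}. The role of $\lambda_{\max}$ is to prevent the two involute arcs from overshooting the intersection $L_\focusA \cap L_\focusB$; when the tangent lines do not meet on the open side there is no such obstruction, matching $\lambda_{\max} = \infty$.

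Next I would show that $\Hid \setminus K_1$ is the hidden set for $K = K_1 \cup K_\lambda$. Fix any $x \in \Hid \setminus K_1$ and direction $v$ and follow the resulting trajectory. Each free segment either strikes the portion of $\partial K_1$ lying inside $\Hid$ (reflecting back into $\Hid \setminus K_1$) or exits $\Hid$ through the open arc into the region $R$ bounded by $\partial \Hid$ and $\partial K_\lambda$. In the second case the defining property from \cref{main theorem} guarantees that after reflecting off $\partial K_\lambda$ the trajectory re-enters $\Hid$ through the open arc. The trajectory is therefore confined forever to the bounded set $(\Hid \setminus K_1) \cup R$, i.e.\ trapped; by time-reversibility of the billiard flow the same set is unreachable from infinity, so $\Hid \setminus K_1$ is hidden, as claimed.

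The main obstacle is the first step: confirming that the two involute arcs emanating from $L_\focusA$ and $L_\focusB$ actually fit together into a single simple strictly convex curve on the open side whenever $\lambda < \lambda_{\max}$. This reduces to a careful analysis of how the involute parameter evolves as the tangent point sweeps along the $K_1$-side arc of $\partial \Hid$, using convexity of $\Hid$ and the bound on $\lambda_{\max}$ to rule out self-intersection or extension past $L_\focusA \cap L_\focusB$. A minor subtlety arises at the tangent-meeting points $\focusA, \focusB$, where $R$ pinches to zero width: trajectories exactly aligned with the tangents form only a measure-zero set, and all remaining trajectories cross $\partial \Hid$ transversally into $R$, so the trapping argument above still applies.
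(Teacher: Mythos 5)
Your overall strategy (build the reflecting curve only on the side of $\Hid$ opposite $K_1$, show that trajectories from $\Hid \setminus K_1$ are confined to $(\Hid\setminus K_1)\cup R$, and use time-reversibility) matches the paper's intent, but the curve you propose is not the one that works, and the difficulty you flag at the end is a genuine gap rather than a verification to be carried out. You take the free-rope involute of \cref{main theorem} --- both ends of the rope terminating at tangent contacts with $\partial\Hid$ --- and restrict it to the open side. That curve does not end where you say it does: for $\Hid$ a disk the level curve of \cref{main theorem} is a concentric circle, which meets the tangent line $L_\focusA$ neither orthogonally nor at distance $\lambda$ from $\focusA$, so cutting it at $L_\focusA$ and $L_\focusB$ leaves you with no control over reflections near the cut points, which is exactly where an outside trajectory could slip past the obstacle. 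Your description of the endpoints is also internally inconsistent: you say $\partial K_\lambda$ begins at distance $\lambda$ from $\focusA$ along $L_\focusA$ but also that the region $R$ \enquote{pinches to zero width} at $\focusA$ and $\focusB$; the latter would force $d(\Hid, K_\lambda)=0$ and is not what happens in the correct construction.

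The missing idea is the paper's modification of the potential: tie the rope at $\focusA$ and $\focusB$ (the \enquote{gardener's ellipse}), i.e.\ pin $T_{\minus}=\focusA$ throughout the half-plane $R_A$ beyond $L_\focusA$ and $T_{\plus}=\focusB$ throughout $R_B$, letting only the other tangent point slide along the open arc. With this change the level curve $\varphi = 2\lambda + L$ (now with $L$ the length of the open arc alone) meets $L_\focusA$ and $L_\focusB$ orthogonally at points $A'$, $B'$ with $\|\focusA - A'\| = \|\focusB - B'\| = \lambda$, located on the $R_0$ side rather than the open side; this is precisely what allows the arbitrary remainder of the obstacle to be attached at $A'$, $B'$ and what controls the openings $AA'$, $BB'$. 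The reflection invariant changes accordingly: off the modified curve a chord preserves the property of meeting the convex hull of the open arc (not of meeting all of $\Hid$), and a free segment crossing the open segment $AA'$ cannot meet that hull without first striking $K_1$, which is why trajectories entering through the gaps never reach $\Hid\setminus K_1$. Without pinning the tangent points at $\focusA$ and $\focusB$ none of these statements is available, and your plan offers no substitute for them.
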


\begin{proof}[Sketch of proof]
Following the goat and silo analogy, tie one end of the rope to point $A$ and the other end to point $B$, so that the rope passes around the silo on the opposite side to $K_1$, and again let the goat move freely along the rope. The boundary of the region the goat can access is the essential part of the billiard $K_\lambda$; the rest is arbitrary. At the points where the curve crosses the lines $L_A, L_B$, extend it back around to create the obstacle $K_\lambda$, as in \cref{fig2}(\textsc{b}). The method is similar to the so-called \enquote{gardener's ellipse} method of drawing an ellipse.
\end{proof}



\begin{cor}[Answer to the illumination problem]\label{cor2}
	Let $\Hid_1$, $\Hid_2$ be disjoint convex subsets of $\Reals^2$. Then there exists a closed billiard table $K(\Hid_1, \Hid_2)$, with arbitrarily small overlap with $\Hid_1$ and $\Hid_2$, such that, for any candle placed in $K(\Hid_1, \Hid_2)$, at least one of $\Hid_1 \backslash K$ and $\Hid_2 \backslash K$ is dark. 
\end{cor}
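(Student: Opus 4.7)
The plan is to apply \cref{cor1} once to each convex set and then splice the two sub-configurations together into a single closed billiard. For each $i\in\{1,2\}$, choose boundary points $A_i, B_i\in\partial\Hid_i$ close enough together that the short arc of $\partial\Hid_i$ between them has arbitrarily small length, with tangent lines $L_{A_i}, L_{B_i}$ meeting on the side of $\Hid_i$ away from $\Hid_j$ ($j\neq i$). \cref{cor1} then yields a \enquote{goat} curve $\Gamma_i$ (drawn by tying a rope at $A_i$ and $B_i$ and letting the goat walk), together with a small cap $K_{1,i}\subset\Hid_i$ bounded by the short arc from $A_i$ to $B_i$, such that $D_i:=\Hid_i\setminus K_{1,i}$ is the hidden set for the obstacle $\Gamma_i\cup K_{1,i}$.

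I then form $K(\Hid_1,\Hid_2)$ as a closed room whose boundary is a single simple closed curve consisting of: the two curves $\Gamma_1,\Gamma_2$; the two short arcs of $\partial\Hid_i$ from $A_i$ to $B_i$; and arbitrary smooth connecting pieces drawn along the tangent lines $L_{A_i}, L_{B_i}$ joining the endpoints of $\Gamma_1$ and $\Gamma_2$ into a single loop. The only overlap of this boundary with $\Hid_i$ is the short arc from $A_i$ to $B_i$, which is arbitrarily small. The interior of the resulting billiard then decomposes naturally into the hidden chambers $D_1$ and $D_2$ inside each $\Hid_i$, together with a main region $M$ comprising the rest of the room.

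The dynamical conclusion is then immediate from \cref{cor1} and the time-reversibility of the billiard flow: each $D_i$ is a hidden set, hence invariant under the flow in both forward and backward time. Consequently, no light ray can leave $D_i$ if it starts there, and, by reversibility, no ray starting outside $D_i$ can ever enter $D_i$. A case analysis on the candle position finishes the argument: if the candle lies in $D_1$ then $\Hid_2\setminus K$ is dark; symmetrically if it lies in $D_2$; and if the candle lies in $M$ then both $\Hid_1\setminus K$ and $\Hid_2\setminus K$ are dark. In every case, at least one of $\Hid_1\setminus K$ and $\Hid_2\setminus K$ is dark.

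The main obstacle is geometric rather than dynamical: one must verify that the two \cref{cor1} sub-configurations can be positioned so that the goat curves $\Gamma_1,\Gamma_2$ do not cross each other or intersect the wrong convex set, and so that the tangent lines $L_{A_i}, L_{B_i}$ can indeed be used to close everything up into a single simple closed curve. Since $\Hid_1$ and $\Hid_2$ are disjoint and the rope length in each sub-configuration may be taken close to the perimeter of $\Hid_i$, one can confine $\Gamma_i$ to any prescribed neighbourhood of $\Hid_i$, making the splicing routine; the freedom in \cref{cor1} to choose the rest of the obstacle arbitrarily then ensures the connecting arcs can be drawn without further obstruction.
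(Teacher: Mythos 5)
Your proposal is correct and follows essentially the same route as the paper, whose entire proof is the single sentence: construct $K_\lambda(\Hid_1)$ and $K_\lambda(\Hid_2)$ via \cref{cor1} with small $\lambda$ and join their openings into a closed billiard. You simply make explicit the details the paper leaves implicit, namely the case analysis on the candle's position and the use of time-reversibility to show no ray from outside a hidden chamber can enter it.
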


\begin{proof}
	Using the proof of \cref{cor1}, construct billiards $K_\lambda(\Hid_1)$ and $K_\lambda(\Hid_2)$ with sufficiently small $\lambda$, and join their openings together to form a closed billiard.
\end{proof}

\section{Preliminaries}

\subsection{Billiards}
Although in this paper we only consider billiards in $\Reals^2$, we will define them more generally here in order to introduce the concepts of hidden, free and mixed points in general. Let $g_t$ be the geodesic flow on the unit sphere bundle $SM$ of a $\mathcal{C}^k$ $(k \geq 2)$ Riemann manifold $M$ with dimension $n \geq 2$. Let $K$ be a compact subset of $M$ with $\mathcal{C}^l$  $(l \geq 2)$ boundary $\partial K$, and non-empty interior $K \backslash \partial K$. For an \emph{open billiard}, consider the compact subset $\Omega = \overline{M \backslash K}$ of $M$. For a \emph{closed billiard}, instead let $\Omega = K$. 

In either case, assume that $\Omega$ is connected. The \emph{billiard flow} $\bflow_t$ in $\Omega$ coincides with the geodesic flow $g_t$ in the interior, and when the geodesic hits the boundary $\partial K$ at $q$ with direction $v^-$, it reflects according to the law of reflection in optics:
$$v^+ = v^- - 2 \left\langle v^-, \nu(q) \right\rangle \nu(q),$$ 
where $\nu(q) \in SM$ is the unit normal vector to $\partial K$ pointing into the interior of $\Omega$. Let
$$S_q = \left \{ \begin{array}{l l}
\mathbb{S}^{n-1}	& \text{ if } q \in \Int \Omega\\
\left\{v \in \mathbb{S}^{n-1}: \left\langle v, \nu(q) \right\rangle >0 \right\}
& \text{ if } q \in \partial \Omega.
\end{array}\right.$$
Define the \emph{phase space}
$$\hat{\Omega} = \{(q, v): q \in \Int \Omega, v \in S_q\}.$$
It is well-known (e.g. \cite{cornfield1982ergodic} Section 2.4) that the geodesic flow $g_t$ preserves the Liouville measure on $SM$. The billiard flow preserves the restriction of the Liouville measure to $S\Omega$. If $\bflow_t (q, v)$ ever reaches a point on $\partial K$ that is not differentiable, or if the trajectory is ever tangent to $\partial K$, we say the trajectory is a \emph{singularity}. 

\subsection{Free, mixed, and hidden points}

Following \cite{stoyanov2017santalo}, in an open billiard, we say a state $(q,v) \in \hat{\Omega}$ is \emph{trapped} if it reflects infinitely many times in the forward direction. Otherwise, we say $(q,v)$ \emph{escapes}. We say $(q,v)$ is \emph{completely trapped} if it has infinitely many reflections in both directions (i.e. both $(q,v)$ and $(q, -v)$ are trapped). Denote the set of trapped states in $\hat{\Omega}$ by $\Trap(\hat{\Omega})$ \cite{stoyanov2017santalo}. For a point $q \in \hat{\Omega}$, denote by $\Tvectors(q) \subset S_q$ the set of vectors $v \in S_q$ such that $(q, v)$ is trapped. Denote by $\Fvectors(q) = S_q \backslash \Tvectors(q)$ the set of vectors $v \in S_q$ such that $(q,v)$ escapes. These sets are disjoint and satisfy 
$$0 \leq \omega_q(\Tvectors(q)), \omega_q(\Fvectors(q)) \leq \omega_q(S_q),$$
where $\omega_q$ is the Lebesgue measure on the unit sphere.
Let the \emph{free set} $\F \subset \Omega$ be the set of \emph{free points} $q \in Q$ that satisfy $\omega_q(T(q)) = 0$. Let the \emph{mixed set} $\M \subset \Omega$ be the set of \emph{mixed points} $q \in \Omega$ that satisfy $0 < \omega_q(\Tvectors(q)) < \omega_q(S(q))$. Let the \emph{hidden} set $\Hid$ be the set of \emph{hidden points} $q \in \Omega$ such that for $\omega_q$-almost all $v \in S(q)$, the state $(q,v)$ is trapped. The sets $\Hid, \M, \F$ are disjoint, and $\Hid \cup \M \cup \T = \Omega$.

\begin{remark}
	It is possible to have \enquote{almost hidden} points $q$ for which $\Tvectors(q) = \omega_q(S(q))$, but there is nevertheless at least one vector $v$ such that $(q, v)$ escapes. For example, if two circular obstacles are tangent to each other, then a trajectory passing between them may escape while every other trajectory from the same point is trapped. 
\end{remark}



\begin{prop}
	If $K$ is a billiard in $\Reals^2$ with a hidden set, the hidden set must be convex wherever it does not intersect $K$.
	
	\begin{proof}
		Suppose $\partial \Hid \cap \partial \M$ is strictly concave around some point $p \notin K$. Then by compactness of $K$, there exists an open neighborhood $A$ of $p$ such that $A \cap K = \emptyset$. Then consider a region $B$ containing $p$, bounded by $\partial \Hid \cap \partial \M$ and by a line segment with endpoints in $\partial \Hid \cap \partial \M \cap A$. For any $q \in \Int B, v \in \mathbb{S}^1$, there exists $t \in \Reals$ such that $\phi_t(q, v)$ is trapped, therefore $(q,v)$ is trapped. So $q$ is a hidden point, contradicting the assumption. 
	\end{proof}
\end{prop}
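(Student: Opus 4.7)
The plan is a proof by contradiction. Assume, toward a contradiction, that $\partial \Hid$ is strictly concave (viewed from $\Omega \setminus \Hid$) at some point $p \in \partial \Hid$ with $p \notin K$. By compactness of $K$, pick an open neighbourhood $A$ of $p$ disjoint from $K$. Within $A$, the failure of local convexity lets us choose $a, b \in \partial \Hid$ close to $p$ so that the chord $[a,b]$ lies entirely in $\Omega \setminus \Hid$; together with the short arc of $\partial \Hid$ joining them it encloses a lens-shaped region $B$ satisfying $B \subset \Omega \setminus \Hid$ and $B \cap K = \emptyset$. Disjointness from $K$ is essential: it guarantees that every line segment lying in $B$ is a genuine piece of billiard trajectory with no intermediate reflection.

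The aim is to show that every $q \in \Int B$ is in fact a hidden point, contradicting $B \cap \Hid = \emptyset$. Fix such a $q$ and a direction $v \in \Sphere^1$. The straight line through $q$ in direction $v$ meets the chord at most once (since $q$ is strictly off the chord), so at least one of the two exit points of this line from $B$ must lie on the arc $\partial \Hid \cap \partial B$. Hence there is a time $t \in \Reals$ (possibly negative) with $\bflow_t(q,v) = (q', v)$ where $q' \in \partial \Hid$, and a short continuation in the same direction past $q'$ enters $\Int \Hid$. Because $q'$ is hidden, the definition gives that, outside an $\omega_{q'}$-null set of directions, $(q', v)$ is forward-trapped. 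For $t > 0$ this already forces $(q,v)$ to be forward-trapped; for $t < 0$ the identity $\bflow_{|t|}(q',v) = (q,v)$ places $(q,v)$ on the forward orbit of the trapped $(q',v)$, so the same conclusion holds.

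Combining the forward-arc and backward-arc exit regimes, one obtains that $\omega_q$-almost every $v$ is trapped from $q$, so $q \in \Hid$---the desired contradiction with $q \in B \subset \Omega \setminus \Hid$.

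The hard part will be the measure-theoretic bookkeeping in the preceding step. As $v$ varies over $\Sphere^1$, the exit point $q'(v)$ traces an arc of $\partial \Hid$, and the $\omega_{q'}$-null set of exceptional directions at each $q' = q'(v)$ must be pulled back along this variation to show that the exceptional set of starting $v$ at $q$ is still $\omega_q$-null. Since the straight-line motion preserves direction and the map $v \mapsto q'(v)$ is a diffeomorphism on each of the two angular sectors (one for forward-arc exits, one for backward-arc exits) with bounded Jacobian away from the chord endpoints, a Fubini-type argument handles this. Ancillary measure-zero sets of directions---those grazing the endpoints $a$ or $b$, tangent to $\partial \Hid$, or encountering a singularity of the billiard flow---are absorbed into the exceptional set.
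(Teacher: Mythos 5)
Your overall strategy is the same as the paper's: assume strict concavity of $\partial\Hid$ at a point $p\notin K$, use compactness of $K$ to find a neighbourhood $A$ of $p$ disjoint from $K$, form the lens-shaped region $B$ between a short chord and the arc of $\partial\Hid$, and observe that every line through a point $q\in\Int B$ meets the chord at most once and so must leave $B$ through the arc, entering $\Int\Hid$ with no intervening reflection; trappedness then propagates along the orbit back to $(q,v)$, forcing $q$ to be hidden and giving the contradiction. Up to that point you match the paper, and you are in fact more scrupulous than the paper is: the published proof simply asserts that \emph{every} direction from $q$ is trapped, even though its own definition of a hidden point only guarantees trappedness for $\omega_{q'}$-almost all directions at the entry point $q'$.

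The difficulty is that the repair you propose for this --- the step you yourself flag as ``the hard part'' --- would fail as described. Knowing that $\Fvectors(q')$ is $\omega_{q'}$-null for each $q'$ on the arc tells you, via Fubini, that the two-dimensional set $\bigcup_{q'}\{q'\}\times \Fvectors(q')$ is null for the product measure; it says nothing about its intersection with the one-dimensional curve $v\mapsto (q'(v),v)$, which could in principle lie entirely inside that fibrewise-null set (a graph has null slices yet meets itself everywhere). The bounded Jacobian of $v\mapsto q'(v)$ does not help, because the exceptional set varies with $q'$: there is no single null set being pulled back. A correct completion instead uses invariance of the escaping set $E$ under the flow together with preservation of the Liouville measure: $E\cap(\Int\Hid\times\Sphere^1)$ has Liouville measure zero by Fubini; every escaping state over $\Int B$ lies in $\bflow_{-s}\bigl(E\cap(\Int\Hid\times\Sphere^1)\bigr)$ for some \emph{rational} $s$, since the trajectory spends an open interval of time inside the open set $\Int\Hid$; hence $E\cap(\Int B\times\Sphere^1)$ is a countable union of null sets. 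Fubini then gives $\omega_q(\Fvectors(q))=0$ for almost every $q\in\Int B$, so almost every point of $\Int B$ is hidden, which already contradicts $\Int B\cap\Hid=\emptyset$ because $\Int B$ has positive area.
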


\subsection{Lemmas on convex sets and Lipschitz functions}

\begin{defn}
	A \emph{supporting line} is one that contains at least one point in $\partial H$, but does not separate any two points of $H$. A clockwise or anticlockwise \emph{supporting ray} to a convex set $\Hid$ is a ray beginning at a point $A \in \partial \Hid$, parallel to a supporting line through $A$, and in a direction such that $\Hid$ is always on the left or right respectively. 
\end{defn}

\begin{lem} \label{tangent lemma}
	For a given point $P$ outside a convex set $H$, there is exactly one clockwise supporting ray and one anticlockwise supporting ray to $H$ that passes through $P$.
\end{lem}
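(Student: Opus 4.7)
The plan is to reduce the claim to the classical fact that from an external point $P$ to a closed convex set there are exactly two supporting lines passing through $P$, and then to match the two resulting rays with the two orientations.

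First, I would observe that any supporting ray from $A \in \partial H$ passing through $P$ must coincide with its own underlying supporting line: a ray is determined by its starting point and direction, and here the direction is that of $P$ from $A$; the supporting line passes through $A$ and is parallel to the ray, so the two coincide. Hence the existence of such a supporting ray is equivalent to the line $PA$ itself being a supporting line of $H$ at $A$, and the problem reduces to counting those $A \in \partial H$ for which $PA$ supports $H$.

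To count these, I would consider the continuous direction map $f : \partial H \to \Sphere^1$ given by $f(A) = (A - P)/\|A - P\|$, which is well-defined because $P \notin H$. Since $\partial H$ is connected, $f(\partial H)$ is a connected subset of $\Sphere^1$; and since $P \notin H$ the separating hyperplane theorem yields an open arc of directions from $P$ missed by $f(\partial H)$, so the image is a proper closed arc with two endpoints $\theta_1, \theta_2$. Any preimage $A_i$ of an endpoint has the property that no point of $H$ lies strictly beyond direction $\theta_i$ from $P$, which means geometrically that $H$ lies in one closed half-plane bounded by the line $PA_i$, so $PA_i$ is a supporting line of $H$ at $A_i$. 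Conversely, for $A$ with $f(A)$ in the interior of the arc, directions on both sides of $f(A)$ are attained, forcing points of $\partial H$ on both sides of $PA$, so $PA$ is not a supporting line.

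Finally, I would verify the orientations. The two rays from $A_1, A_2$ towards $P$ have $H$ on opposite sides, because the two supporting lines $PA_1, PA_2$ form a wedge with apex at $P$ whose far region contains $H$: traversing the ray from $A_1$ towards $P$ places this wedge (and hence $H$) on one side, while traversing from $A_2$ towards $P$ places it on the other. Thus exactly one of the two rays is clockwise and exactly one is anticlockwise, as required. The main obstacle I foresee is the treatment of degenerate cases -- when $H$ has empty interior (e.g.\ a segment) or is unbounded -- but the same argument adapts once one uses the appropriate notion of $\partial H$ and verifies continuity of $f$ directly, with the endpoints of a segment or the asymptotic directions of an unbounded $H$ playing the role of $A_1, A_2$.
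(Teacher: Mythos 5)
Your proof is correct for the intended setting (a compact convex $H$, which is what the paper uses even though the lemma doesn't say so), but it takes a genuinely different and more rigorous route than the paper. The paper's proof is a two-line separation argument: existence of at least two supporting lines through $P$ is asserted as ``clear,'' a hypothetical third supporting line would separate the two tangent points of the first two, and the orientation claim is left as ``easy to see.'' You instead introduce the direction map $f(A)=(A-P)/\|A-P\|$ on $\partial H$, show its image is a proper closed arc of $\Sphere^1$ (using strict separation of $P$ from $H$), and identify supporting lines through $P$ exactly with the two endpoint directions of that arc; this buys you an actual existence proof (the step the paper waves away) and a clean uniqueness argument in one stroke, at the cost of needing compactness to guarantee the arc is closed and contained in an open half-circle. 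Two caveats, both of which apply equally to the paper's own proof but which your closing remark does not fully dispose of: (i) if the supporting line through $P$ meets $\partial H$ in a segment collinear with $P$, the endpoint direction has a whole segment of preimages and there are then several distinct rays (with different base points $A$) of the same orientation through $P$, so ``exactly one'' needs either a convention identifying nested rays or an assumption of strict convexity; (ii) for unbounded $H$ the lemma can simply fail (e.g.\ $H$ a closed half-plane admits no supporting line through an exterior point), so the ``asymptotic directions play the role of $A_1,A_2$'' fix does not work in general --- the honest resolution is that the paper only ever applies the lemma to compact $H$.
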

\begin{proof}
	Clearly there are at least two supporting lines (one on each side). A third supporting line would separate the two tangent points corresponding to the other two lines, which is a contradiction. It is easy to see that if one supporting ray has $\Hid$ on the left then the other has $\Hid$ on the right, so one is clockwise and the other is anti-clockwise. 
\end{proof}

\begin{lem} \cite{niculescu2006convex} \label{diff ae}
	Let $\Hid$ be a convex set with boundary arc-length \paramised by $\h(t)$. Then $\h(t)$ is continuous, semi-differentiable everywhere (i.e. the left and right derivatives $\partial_{\minus} \h$ and $\partial_{\plus} \h$ exist but may not be equal), and Lipschitz continuous everywhere. It is also differentiable everywhere except possibly at countably many points, and twice differentiable almost everywhere.
\end{lem}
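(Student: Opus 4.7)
The plan is to reduce the statement to classical properties of one-variable convex functions and then transport those properties to the arc-length parameterisation $\h(t)$ via a bi-Lipschitz change of parameter. First I would cover $\partial \Hid$ by finitely many arcs, each of which, after a rotation of coordinates, is the graph $y = f(x)$ of a convex function $f : I \to \Reals$ on a compact interval. Such a cover exists because at every point $p \in \partial \Hid$ one can use the horizontal supporting line supplied by \cref{tangent lemma}: convexity places $\Hid$ on one side of the supporting line, so $\partial \Hid$ is locally the graph of a function which inherits convexity from $\Hid$. Compactness of $\partial \Hid$ (or a purely local argument when $\Hid$ is unbounded) yields a finite subcover.

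Next I would invoke the standard one-dimensional theory. Any convex $f : I \to \Reals$ is continuous and locally Lipschitz on the interior of $I$; its one-sided derivatives $\partial_\minus f$ and $\partial_\plus f$ exist everywhere and are nondecreasing with $\partial_\minus f \leq \partial_\plus f$. A monotone function has at most countably many jump discontinuities, so $\partial_\minus f = \partial_\plus f$ outside a countable set, giving differentiability of $f$ off a countable set. Lebesgue's theorem for monotone functions then says $f'$ exists almost everywhere, and being monotone is itself differentiable almost everywhere, so $f$ is twice differentiable a.e.\ on $I$. For the global Lipschitz property of $\h$, note that arc-length parameterisation gives $\|\h(t_1) - \h(t_2)\| \le |t_1 - t_2|$ trivially, since the chord is at most the arc length.

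Finally I would transfer the remaining regularity to $\h(t)$. The arc-length function $s(x) = \int_{x_0}^x \sqrt{1 + f'(u)^2}\, du$ has integrand bounded between $1$ and a constant $C$ on any compact subinterval (using the local Lipschitz bound on $f$), so $x \mapsto s(x)$ and its inverse $s \mapsto x(s)$ are bi-Lipschitz. Under such a change of parameter, continuity, semi-differentiability, and differentiability outside a countable set all pass directly to $\h(t) = (x(t), f(x(t)))$ by the chain rule, and patching the finitely many graph pieces completes those parts of the statement. The principal technical obstacle is preserving twice-differentiability almost everywhere, since this is not automatic under an arbitrary change of variable; however, $s'(x) = \sqrt{1 + f'(x)^2}$ inherits differentiability a.e.\ from $f'$, so $x(t)$ is twice differentiable a.e., and combined with the Lusin N property of bi-Lipschitz maps (null sets map to null sets) the chain rule yields $\h''(t)$ off a set of measure zero, finishing the proof.
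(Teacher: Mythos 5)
Your proposal is correct and follows essentially the same route as the paper: the paper's proof simply cites the one-variable convex-function results of \cite{niculescu2006convex} and asserts they ``can be easily extended to convex sets,'' and your argument is precisely that extension carried out in detail (local graph representation over supporting lines, the classical theory of monotone one-sided derivatives and Lebesgue differentiation, and the bi-Lipschitz arc-length reparameterisation). The only quibble is that the existence of a supporting line at a boundary point is a standard fact distinct from \cref{tangent lemma}, which concerns supporting rays through an exterior point.
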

\begin{proof}
	The proofs found in \cite{niculescu2006convex} are for convex functions. However they can be easily extended to convex sets (see e.g. \cite[Exercise 4, page 29]{niculescu2006convex}).
\end{proof}
Note that the boundary may be non-differentiable at a dense set of points \cite[Remark 1.6.2]{niculescu2006convex}. The second derivatives may not exist at uncountably many points (for example, if part of the boundary is the integral of the Cantor function \cite{darst1993hausdorff}). A convex set may contain dense sets of line segments and corners. 

\section{Construction} \label{S: construction}
\subsection{Parameterisation and tangential angle}
Let $\Hid$ be a convex set with perimeter $L$. Let $\h(t)$ be an anticlockwise, arc-length \paramation for $\partial H$, for $t \in [0, L]$. For $v \in \mathbb{S}^1$, define $\angle v \in [0, 2\pi)$ to be the anticlockwise angle from the $x$-axis to $v$. Without loss of generality, assume that $\h(t)$ is differentiable at $t = 0$ and that $\h'(0) = (1, 0)$. A \emph{tangential angle} or \emph{turning angle} of a curve at a point is the angle between the vector $\h'(0)$ and a supporting line through the point \cite{whewell1849intrinsic}. The tangential angle is a set-valued function of the parameter $t$, specifically the $\angle$ map applied to the \emph{subdifferential} of $\h$ \cite{niculescu2006convex}: 
$$\theta(t) = \angle \partial \h(t) = [\angle \partial_{\minus} \h(t), \angle \partial_{\plus} \h(t)].$$
Then $\theta(t)$ is monotonic if and only if the curve is convex \cite{abbena2006modern}.  The inverse relation is also a set valued function:
$$t(\theta) = \left\{t: \angle \partial_{\minus}\h(t) \leq \theta \leq \angle \partial_{\plus}\h(t) \right\}.$$
Define $\that(\theta), \tch(\theta)$ as the supremum and infimum of this set respectively. 
\begin{prop}
	The set valued function $t(\theta)$ is monotonic everywhere, in the sense that if $\theta_1 < \theta_2$ then $\that(\theta_1) < \tch(\theta_2)$. It is continuous and differentiable almost everywhere.
\end{prop}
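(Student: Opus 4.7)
The plan is to reduce each of the three assertions — monotonicity, continuity, and almost-everywhere differentiability — to classical results about monotone real functions, combined with the non-decrease of the tangential angle $\theta(t)$. The latter follows from \cref{diff ae} together with the standard fact that the subdifferential of a convex function is a monotone set-valued map.

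First I would establish the order claim by contradiction. Suppose $\theta_1 < \theta_2$ yet one can select $t_1 \in t(\theta_1)$ and $t_2 \in t(\theta_2)$ with $t_1 > t_2$. Monotonicity of $\partial \h$ then yields $\angle \partial_{\plus} \h(t_2) \le \angle \partial_{\minus} \h(t_1)$; combining this with the membership conditions $\theta_2 \le \angle \partial_{\plus}\h(t_2)$ and $\angle \partial_{\minus} \h(t_1) \le \theta_1$ produces the chain
$$\theta_2 \le \angle \partial_{\plus} \h(t_2) \le \angle \partial_{\minus} \h(t_1) \le \theta_1,$$
contradicting $\theta_1 < \theta_2$. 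Hence every element of $t(\theta_1)$ lies weakly to the left of every element of $t(\theta_2)$, which gives $\that(\theta_1) \le \tch(\theta_2)$. Strictness follows except in the degenerate case when both $\theta_1, \theta_2$ lie inside a common corner interval $[\angle \partial_{\minus} \h(t^*), \angle \partial_{\plus} \h(t^*)]$, in which limiting case $\that(\theta_1) = \tch(\theta_2) = t^*$.

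Once monotonicity of $\that$ and $\tch$ is established, continuity is immediate because any monotone real function on an interval has at most countably many discontinuities, so $\that$ and $\tch$ are continuous on a conull subset of $[0, 2\pi)$; almost-everywhere differentiability then follows directly from Lebesgue's theorem on the differentiation of monotone functions of one real variable, applied separately to each of $\that$ and $\tch$. The main obstacle is the strict-inequality part of monotonicity in the presence of corners of $\partial \Hid$: at each corner at $t^*$, the whole angle interval $[\angle \partial_{\minus} \h(t^*), \angle \partial_{\plus} \h(t^*)]$ is mapped by $t(\cdot)$ to the singleton $\{t^*\}$, so the strict version of the claim must either exclude this (at most countable) family of angle intervals or be relaxed to $\le$; once that bookkeeping is settled, the continuity and a.e.\ differentiability conclusions are routine applications of classical real-variable theory.
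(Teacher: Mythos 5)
Your proof is correct, but it takes a genuinely different route from the paper. The paper's proof is a one-step appeal to convex duality: the inverse (as a relation) of the subdifferential of a convex function $f$ is the subdifferential of the conjugate $f^*$, so $t(\theta)$ inherits all the regularity of \cref{diff ae} applied to $f^*$, with a hand-wave (``this can easily be extended to convex curves'') to pass from convex functions to convex curves. You instead argue directly: the order claim falls out of the monotonicity of the set-valued tangential angle $\theta(t)$ by the contradiction chain $\theta_2 \le \angle \partial_{\plus}\h(t_2) \le \angle \partial_{\minus}\h(t_1) \le \theta_1$, and continuity/differentiability almost everywhere then follow from the classical facts that a monotone real function has at most countably many discontinuities and is differentiable a.e.\ (Lebesgue). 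Your route is more elementary and more self-contained --- it avoids the conjugate entirely and makes explicit the selection functions $\that, \tch$ to which the real-variable theorems are actually applied, which the paper leaves implicit. What the paper's route buys is brevity and a uniform source for all the regularity statements at once. You also catch something the paper glosses over: the strict inequality $\that(\theta_1) < \tch(\theta_2)$ genuinely fails when $\theta_1, \theta_2$ both lie in the angle interval $[\angle \partial_{\minus}\h(t^*), \angle \partial_{\plus}\h(t^*)]$ of a single corner of $\partial\Hid$ (e.g.\ when $\Hid$ is a polygon), where $t(\cdot)$ collapses the whole interval to $\{t^*\}$; the correct general statement is $\that(\theta_1) \le \tch(\theta_2)$, with strictness off the countable family of corner intervals. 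That is a flaw in the proposition as stated rather than in your argument, and your bookkeeping of it is the honest way to handle it; it is harmless for the rest of the paper, which only needs the weak monotonicity and the single-valuedness of $\h(t(\theta))$ off the line segments.
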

\begin{proof}
The inverse (as a relation) of the subdifferential of a convex function $f$ is the subdifferential of the convex conjugate $f^*$ \cite[Theorem 1.7.3]{niculescu2006convex}. That is,
$$(\partial f)^{-1} = \partial f^*.$$
Since $f^*$ is a convex function, it has all the smoothness proporties of a convex function in \cref{diff ae}. This can easily be extended to convex curves. So $t(\theta)$ is monotonic everywhere, and single valued wherever $\h(t(\theta))$ is not a line segment. It is continuous and differentiable almost everywhere. 
\end{proof}

\subsection{Tangential coordinates}
Next we set up two different coordinate systems for $\overline{\Reals^2 \backslash \Hid}$. We can express any point $p$ in $\overline{\Reals^2 \backslash \Hid}$ using the clockwise or anticlockwise tangent rays to $\Hid$ through $p$. For $\theta \in [0, 2\pi)$ and $u \in \Reals$, define a function
$$\p(u,\theta) = \h(t) + (u - t) \begin{pmatrix}\cos \theta \\ \sin \theta \end{pmatrix}, \quad \text{ for any } t \in t(\theta).$$
This function is single valued and continuous, because if $t(\theta)$ is not single valued then $\h(t(\theta))$ is on a line segment in the direction of $(\cos \theta, \sin \theta)$. If $u > \widehat{t}(\theta)$ then the function represents the end of a rope of length $u$, with the other end tied at $\h(0)$, wrapped anticlockwise around $\Hid$ until its tangential angle is $\theta$.

\begin{lem}	
	The function $\p(u, \theta)$ is locally Lipschitz continuous with respect to $\theta$.	
	\begin{proof}
		For any $u \in \Reals$ and $\theta_1 < \theta_2 \in [0, 2\pi)$, let $t_i \in t(\theta_i)$, $\h_i = \h(t_i)$, $\p_i = \p(u, \theta_i)$, and $\phi_i =  \angle \p_1 \h_i \p_2$. By examining the three cases $u < t_1 < t_2$, $t_1 < u < t_2$ and $t_1 < t_2 < u$, it is easy to see that for either $i = 1$ or $i = 2$, we have $\phi_i < \theta_2 - \theta_1$. The triangle $\triangle \p_1 \h_i \p_2$ is contained in a larger isosceles triangle with apex $\h_i$, so we have
		$$\|\p_2 - \p_1\|	\leq \max\left\{ \|\p_2 - \h_i\| \phi_i, \|\p_1 - \h_i\| \phi_i \right\}   \leq K |\theta_2 - \theta_1|,$$ for some constant $K$.
	\end{proof}
\end{lem}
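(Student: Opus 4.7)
The plan is to carry out the geometric argument sketched by the author in detail. Local Lipschitz continuity in $\theta$ (for fixed $u$) follows from a uniform Lipschitz bound on any bounded slab $(u,\theta) \in [-U,U] \times [0,2\pi)$, so I will establish the latter with an explicit constant.

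Fix such a slab, pick $\theta_1 < \theta_2$, and take the extreme representatives $t_1 := \that(\theta_1)$, $t_2 := \tch(\theta_2)$, which satisfy $t_1 < t_2$ by the monotonicity proposition for $t(\theta)$. Write $\h_i := \h(t_i)$, $e_i := (\cos\theta_i,\sin\theta_i)$, and $\p_i := \h_i + (u - t_i)e_i$. The argument now has two independent pieces.

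\emph{Step 1 (angle bound).} Let $\phi_i := \angle \p_1 \h_i \p_2$ be the angle of the triangle $\triangle \p_1 \h_i \p_2$ at the vertex $\h_i$. I claim $\min\{\phi_1,\phi_2\} \leq \theta_2 - \theta_1$, proved by splitting into three cases according to the position of $u$: (a) $u \leq t_1 \leq t_2$; (b) $t_1 < u < t_2$; (c) $t_1 \leq t_2 \leq u$. The geometric content is that the tangent line to $\partial\Hid$ at $\h_1$ and the tangent line at $\h_2$ differ in direction by exactly $\theta_2-\theta_1$, while convexity of $\Hid$ forces the chord $\h_1 \h_2$ (and hence each ray $\h_i\p_j$) to lie on the correct side of these tangents. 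In cases (a) and (c) the point $\p_1$ and the point $\p_2$ sit on the same side of their tangents, and one of the two angles $\phi_i$ is bounded by $\theta_2-\theta_1$ directly by comparing the ray $\h_i \p_j$ ($j \neq i$) against $\pm e_i$. Case (b) is the delicate one, since $\p_1$ and $\p_2$ are on opposite sides of their tangent points; here I will split $\phi_i$ at the tangent direction $\pm e_i$ and apply the convexity-based comparison to each half separately. Degenerate subcases $u = t_i$ collapse one triangle and the bound becomes trivial.

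\emph{Step 2 (triangle bound).} Given the angle bound, the index $i$ for which $\phi_i \leq \theta_2-\theta_1$ is used as follows. Let $R_i := \max\{\|\p_1-\h_i\|,\|\p_2-\h_i\|\}$. Enlarge the shorter of the two legs of $\triangle \p_1 \h_i \p_2$ so as to obtain an isosceles triangle with apex $\h_i$, apex angle $\phi_i$, and legs of length $R_i$; the base of this enlarged triangle has length $2R_i \sin(\phi_i/2) \leq R_i \phi_i$ and dominates $\|\p_2-\p_1\|$. Hence
$$\|\p_2-\p_1\| \leq R_i \phi_i \leq R_i |\theta_2-\theta_1|.$$
On the bounded slab, $R_i \leq |u-t_i| + \|\h_i - \p_j\|$ is bounded by a constant $K$ depending only on $U$ and the diameter of $\Hid$, which gives the desired local Lipschitz constant.

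The main obstacle is the case analysis in Step 1, and in particular case (b), where $\p_1$ and $\p_2$ lie on opposite sides of the tangent directions at $\h_1$ and $\h_2$; the rest of the proof is a short application of the isosceles-triangle chord estimate together with arc-length parameterisation.
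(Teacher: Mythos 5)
Your proposal is essentially the paper's own proof: the same three-case analysis on the position of $u$ relative to $t_1,t_2$ to get $\min\{\phi_1,\phi_2\}\le\theta_2-\theta_1$, followed by the same isosceles-triangle enlargement to bound the chord. One caveat applies equally to your write-up and to the paper's: the claim that the base $2R_i\sin(\phi_i/2)$ of the enlarged isosceles triangle \emph{dominates} $\|\p_2-\p_1\|$ is false for general triangles (if the legs $a\le b$ differ substantially, the opposite side satisfies $c^2=(b-a)^2+4ab\sin^2(\phi_i/2)$, which can exceed $4b^2\sin^2(\phi_i/2)$ when $\phi_i$ is small). It is rescued here only because the two legs at the chosen apex are nearly equal --- e.g.\ in the case $t_1<t_2<u$ with apex $\h_1$ one has $(u-t_1)\cos(\theta_2-\theta_1)\le\|\h_1-\p_2\|\le u-t_1=\|\h_1-\p_1\|$, so $b-a=O\bigl(b(\theta_2-\theta_1)^2\bigr)$ --- and if you are fleshing out the argument you should record that estimate rather than rely on containment alone.
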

Note that $\p$ may be nondifferentiable at a dense set of values of $\theta$. To continue, we will need a fairly technical and recent generalization of derivatives and the implicit function theorem from Gowda \cite{gowda2004inverse,gowda2000algebraic}. 

\begin{defn}[$H$-differentiability and $H$-differentials] \cite{gowda2000algebraic}
	Let $f: X \rightarrow \Reals^n$ for an open set $X \subset \Reals^n$. We say that a non-empty set of matrices $T(x) \subset \Reals^{n \times n}$ is an $H$-differential of $f$ at $x$ if for every sequence $\{x_k\}$ converging to $x$, there exists a convergent subsequence $\{x_{k_j}\}$ and a matrix $M \in T(x)$ such that 
	$$\lim_{j \rightarrow \infty} \frac{f(x_{k_j}) - f(x) - M (x_{k_j} - x)}{\|x_{k_j} - x\|} = 0$$
	We say that $f$ is $H$-differentiable at $x$ if it has a $H$-differential at $x$. 	
\end{defn}

\begin{prop}
	Whenever $u \notin t(\theta)$, the function $\p$ is $H$-differentiable and the set
	\begin{align*}
	T_\p(u,\theta) &= \left\{\widecheck{M}, \widehat{M} \right\}\\
	&= \left\{ \begin{pmatrix}
	\cos \theta		&	-(u - \widecheck{t}(\theta)) \sin \theta\\
	\sin \theta		&	(u - \widecheck{t}(\theta)) \cos \theta
	\end{pmatrix}, \begin{pmatrix}
	\cos \theta		&	-(u - \widehat{t}(\theta)) \sin \theta\\
	\sin \theta		&	(u - \widehat{t}(\theta)) \cos \theta
	\end{pmatrix} \right\}
	\end{align*}
	is an $H$-differential of $\p$ at $(u,\theta)$. 
	
	\begin{proof}
		If $t(\theta)$ is single valued and differentiable, then $\p$ is differentiable at $(u, \theta)$ and its Jacobian matrix is
		$$J_\p(u, \theta) = \begin{pmatrix}
		\cos \theta		&	-(u - t(\theta)) \sin \theta\\
		\sin \theta		&	(u - t(\theta)) \cos \theta
		\end{pmatrix},$$
		so we are done. Suppose $\p$ is not differentiable at some $(u^*, \theta^*)$. Fix $\varepsilon > 0$, and let $\{(u_k, \theta_k)\}$ be a sequence of points converging to $(u^*, \theta^*)$. First we consider limits from the anticlockwise direction. Assume there is an infinite subsequence $k_j$ such that $\theta_{k_j} \leq \theta^*$. Then since $\p$ is differentiable for almost every $\theta$, it must be differentiable at some $\theta_{k_j}'$, where $\theta_{k_j} - \varepsilon < \theta_{k_j}' < \theta_{k_j}$ and $\theta_{k_j} \rightarrow \theta^*$. For convenience, we denote 
		\begin{align*}
		\p^*	&= \p(u^*, \theta^*),	& \p_j &= \p(u_{k_j}, \theta_{k_j}),
			& w_j  &= \left(u_{k_j} - u^*, \theta_{k_j} - \theta^* \right),	\\
		J_j	&= J_\p(u_{k_j}, \theta_{k_j}'),		& \p_j' &= \p(u_{k_j}, \theta_{k_j}'),
			&\quad w_j'  &= \left(u_{k_j} - u^*, \theta_{k_j}' - \theta^* \right).
		\end{align*}
		Using the triangle inequality, 
		\begin{align*}
		\left\|\p_j - \p^* - \widecheck{M} w_j \right\|
			&\leq \left\|\p_j' - \p^* - J_j w_j' + (\p_j - \p_j') + J_j(w_j' - w_j) + (J_j - \widecheck{M}) w_j \right\|\\
			&\leq \|\p'_j - \p^* - J_j w_j' \| + \|\p_j - \p_j'\| + \|J_j(w_j' - w_j)\| + \left\|(J_j - \widecheck{M}) w_j \right\|.
		\end{align*}
		Next we find upper bounds for each term. Note that $\ds \lim_{j \rightarrow \infty} J_j = \widecheck{M}$. So for sufficiently large $j$, we have
		$$\|\p_j - \p_j'\| < K \varepsilon, \quad \|w_j - w_j'\| < \varepsilon, \quad \|J_j - \widecheck{M}\| < \varepsilon,$$
		where $K$ is the Lipschitz constant for $\p$ with respect to $\theta$. So we have
		\begin{align*}
		\frac{\left\|\p_j - \p^* - \widecheck{M} w_j \right\|}{\|w_j\|} 
		&\leq \frac{\|\p'_j - \p^* - J_j w_j' \|  +  K \varepsilon + \|J_j\| \varepsilon + \left\|w_j \right\| \varepsilon}{\|w_j'\| - \varepsilon}.
		\end{align*}
		This holds for all $\varepsilon > 0$, so we have
		$$\left\| \lim_{j \rightarrow \infty} \frac{\p_j - \p^* - \widecheck{M} w_j}{\left\|w_j \right\|} \right\| \leq \lim_{j \rightarrow \infty} \frac{\|\p_j' - \p^* - J_j w_j' \|}{\left\|w_j' \right\|} = 0.$$
		We assumed above that there exist infinitely many $\theta_{k_j} \leq \theta^*$. If we assume instead that there are infinitely many $\theta_{k_j} \geq \theta^*$, we get
		$$\lim_{j \rightarrow \infty} \frac{\p(u_{k_j}, \theta_{k_j}) - \p(u^*, \theta^*) - \widehat{M} \left(
			\theta_{k_j} - \theta^*, u_{k_j} - u^* \right) }{\left\|\left(u_{k_j} - u^*, \theta_{k_j} - \theta^* \right) \right\|} = 0.$$
		Therefore $\left\{\widecheck{M}, \widehat{M} \right\}$ is an $H$-differential of $\p$ at $(u, \theta)$. 
	\end{proof}	
\end{prop}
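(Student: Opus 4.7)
The plan is to verify the $H$-differential definition directly with the two proposed matrices. When $t(\theta^*)$ is single-valued and $t$ is differentiable at $\theta^*$, one has $\widecheck{t}(\theta^*) = \widehat{t}(\theta^*)$; direct differentiation of $\p(u,\theta) = \h(t) + (u-t)(\cos\theta,\sin\theta)$ using $\h'(t) = (\cos\theta,\sin\theta)$ shows that the $t'(\theta)$ contributions cancel, so $\widecheck{M} = \widehat{M}$ is the classical Jacobian and the condition reduces to ordinary differentiability. The substantive case is $\widecheck{t}(\theta^*) < \widehat{t}(\theta^*)$, which corresponds to a line segment of $\partial \Hid$ in direction $\theta^*$; I focus on this case.

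Given a sequence $(u_k,\theta_k) \to (u^*,\theta^*)$, I would pass to a subsequence along which $\theta_k$ approaches $\theta^*$ monotonically; by symmetry it suffices to treat the left-approaching case and produce the matrix $\widecheck{M}$. The key auxiliary fact is the previous proposition: $t(\theta)$ is differentiable almost everywhere, so for fixed $u$ the function $\p(u,\cdot)$ is classically differentiable at almost every $\theta$. For each $k_j$ I pick a nearby differentiable angle $\theta'_{k_j} \in (\theta_{k_j} - \varepsilon_j,\theta_{k_j})$ with $\varepsilon_j$ chosen freely (but small), and write $J_j$ for the Jacobian at $(u_{k_j},\theta'_{k_j})$. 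Monotonicity and left-continuity of $t$ force $t(\theta'_{k_j}) \to \widecheck{t}(\theta^*)$, hence $J_j \to \widecheck{M}$. Inserting $\p(u_{k_j},\theta'_{k_j})$ as an intermediate point yields the four-term triangle inequality
\[
\|\p_j - \p^* - \widecheck{M} w_j\| \leq \|\p'_j - \p^* - J_j w'_j\| + \|\p_j - \p'_j\| + \|J_j(w'_j - w_j)\| + \|(J_j - \widecheck{M})w_j\|,
\]
with the abbreviations $\p_j = \p(u_{k_j},\theta_{k_j})$, $\p'_j = \p(u_{k_j},\theta'_{k_j})$, $w_j = (u_{k_j} - u^*,\theta_{k_j} - \theta^*)$, and $w'_j$ the analogous primed vector. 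The middle two terms are $O(\varepsilon_j)$ via the Lipschitz bound from the preceding lemma and boundedness of $\|J_j\|$, and the fourth is $o(\|w_j\|)$ since $J_j \to \widecheck{M}$.

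The main technical obstacle is the first term $\|\p'_j - \p^* - J_j w'_j\|$: classical differentiability at each $(u_{k_j},\theta'_{k_j})$ only supplies a pointwise Taylor remainder, whereas here both the base point and the displacement vector vary with $j$. I would control it by expanding $\p$ explicitly using its formula — approaching $\theta^*$ from the left, $\p$ is well-behaved on a neighbourhood of the perturbed points, so $\|\p'_j - \p^* - J_j w'_j\|$ splits into contributions from $\h(t(\theta'_{k_j})) - \h(\widecheck{t}(\theta^*))$ and the tangential correction, both estimated by Taylor expansion around $\widecheck{t}(\theta^*)$ together with the continuity of $t$. Choosing $\varepsilon_j = o(\|w_j\|)$ — always possible because the differentiable angles are dense — ensures every quotient of the four terms by $\|w_j\|$ tends to zero, establishing the $H$-differential condition via $\widecheck{M}$ on the left-approaching subsequence. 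The right-approaching case is symmetric with $\widehat{M}$, so $\{\widecheck{M},\widehat{M}\}$ is the desired $H$-differential.
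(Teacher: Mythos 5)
Your proposal follows essentially the same route as the paper: split off the classically differentiable case, and otherwise insert the intermediate point $\p(u_{k_j},\theta'_{k_j})$ at a nearby differentiable angle, apply the same four-term triangle inequality, and control the terms via the Lipschitz lemma and $J_j \to \widecheck{M}$. Your treatment is in fact slightly more careful than the paper's at the one delicate spot — the paper simply asserts $\|\p'_j - \p^* - J_j w'_j\| = o(\|w'_j\|)$ even though the base point varies with $j$, whereas you flag this and sketch the explicit expansion of $\p$ around $\widecheck{t}(\theta^*)$ (together with the adaptive choice $\varepsilon_j = o(\|w_j\|)$) that justifies it.
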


Next we will use Gowda's inverse function theorem for $H$-differentiable functions. 
\begin{theorem}[Inverse function theorem for $H$-differentiable functions] \label{IVT} \cite{gowda2004inverse}
	Let $f: X \rightarrow \Reals^n$ be $H$-differentiable at every point $x \in X$ with an $H$-differential $T_f(x)$. Fix a point $x^* \in X$ and suppose 
	\begin{enumerate}
		\item If $f$ is differentiable at $x \in X$ then $f'(x) \in T(x)$.
		\item The set $T(x)$ is compact.
		\item The map $x \mapsto T(x)$ is upper hemicontinuous.
		\item $T(x^*)$ consists of matrices with only positive or only negative determinants. 
		\item The topological index of $f$ at $x^*$ is the same as the sign of the determinants of matrices in $T(x^*)$.
	\end{enumerate}	
	Then there is a continuous, locally Lipschitz inverse function $f^{-1}$ on a neighborhood of $y^* = f(x^*)$, with the following $H$-differential:
	$$T_{f^{-1}}(y^*) = \{M^{-1}: M \in T_f(x^*)\}.$$
\end{theorem}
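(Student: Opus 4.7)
The plan is to follow the standard strategy for nonsmooth inverse function theorems: first establish that $f$ is locally injective, then use topological degree theory to upgrade this to a local homeomorphism, and finally compute the $H$-differential of the inverse directly from the limit definition. The hypotheses of the theorem are precisely what is needed at each of these three steps.

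First I would show local injectivity on a neighborhood $U$ of $x^*$. By upper hemicontinuity of $x \mapsto T(x)$ together with compactness of $T(x^*)$ and the determinant condition, there is a neighborhood $U$ on which every $M \in T(x)$ has determinant of the same sign and bounded away from zero. I would then establish a nonsmooth mean-value estimate: if $x_1, x_2 \in U$ are close to $x^*$, then by $H$-differentiability one can extract, along the sequence of convex combinations $x^* + s(x_2 - x_1)$ for $s \in [0,1]$, a matrix $M$ in the closure of $\bigcup_{x \in U} T(x)$ such that $f(x_2) - f(x_1) - M(x_2 - x_1)$ is small compared to $\|x_2 - x_1\|$. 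The uniform lower bound on $|\det M|$ then forces $f(x_1) \neq f(x_2)$ whenever $x_1 \neq x_2$, and simultaneously gives a Lipschitz lower bound $\|f(x_1) - f(x_2)\| \geq c \|x_1 - x_2\|$ for some $c > 0$. This lower bound is precisely what delivers the Lipschitz property of the prospective inverse.

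Next I would invoke topological degree theory. The topological index hypothesis, together with local injectivity, implies that $f$ maps a small closed ball $\overline{B}(x^*, r) \subset U$ to a set whose interior contains $y^* = f(x^*)$, by the classical property that a continuous locally injective map of nonzero index is open at the point. Thus $f$ restricts to a homeomorphism between a neighborhood of $x^*$ and a neighborhood $V$ of $y^*$, and the inverse $f^{-1} \colon V \to U$ is well-defined and locally Lipschitz with constant $1/c$ by the previous step.

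Finally, to verify the $H$-differential formula, I would take any sequence $y_k \to y^*$ in $V$ and set $x_k = f^{-1}(y_k) \to x^*$. By $H$-differentiability of $f$ at $x^*$, there is a subsequence $x_{k_j}$ and $M \in T(x^*)$ with
\[
f(x_{k_j}) - f(x^*) - M(x_{k_j} - x^*) = o(\|x_{k_j} - x^*\|).
\]
Multiplying on the left by $M^{-1}$, substituting $x_{k_j} - x^* = f^{-1}(y_{k_j}) - f^{-1}(y^*)$, and using the Lipschitz bound $\|x_{k_j} - x^*\| \leq (1/c) \|y_{k_j} - y^*\|$ to convert the remainder, one obtains
\[
f^{-1}(y_{k_j}) - f^{-1}(y^*) - M^{-1}(y_{k_j} - y^*) = o(\|y_{k_j} - y^*\|),
\]
so $\{M^{-1} : M \in T_f(x^*)\}$ is an $H$-differential of $f^{-1}$ at $y^*$. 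The hard part is clearly the first step: the classical mean value theorem fails for $H$-differentiable maps, so one must work along a carefully chosen sequence of points where $f$ is genuinely differentiable (guaranteed almost everywhere by the Lipschitz regularity implicit in the $H$-differentiable setting) and pass to the limit inside the upper-hemicontinuous set $T$.
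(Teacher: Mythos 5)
The paper does not prove this theorem at all: it is quoted verbatim from Gowda \cite{gowda2004inverse} and used as a black box, so the only meaningful comparison is with Gowda's original argument, which is degree-theoretic throughout. Your outline diverges from it at the decisive step, and that divergence is where the gap lies.

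The gap is your ``nonsmooth mean-value estimate'' in the injectivity step. $H$-differentiability at a point only controls $f(x') - f(x) - M(x'-x)$ along sequences $x' \to x$ with $M \in T(x)$; it does not produce, for two nearby points $x_1 \neq x_2$, a single matrix $M$ from $\overline{\bigcup_{x \in U} T(x)}$ with $f(x_2)-f(x_1)-M(x_2-x_1) = o(\|x_2-x_1\|)$ uniformly in the pair. The mean-value theorems that do exist in this setting (Lebourg/Clarke) deliver $M$ in the \emph{convex hull} of limiting Jacobians along the segment, and a convex combination of matrices each with positive determinant can be singular, so your uniform lower bound on $|\det M|$ does not survive the averaging and the injectivity conclusion fails. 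This is exactly the distinction between Clarke's inverse function theorem (which assumes nonsingularity of the convex hull and admits your style of proof) and Gowda's theorem (which assumes only a common determinant sign on the non-convexified set $T(x^*)$ and must compensate with hypothesis (5)). A telltale sign is that your argument never uses the topological index hypothesis for injectivity, only for openness; in Gowda's proof that hypothesis is the engine of injectivity: one first shows, from compactness and nonsingularity of $T(\bar x)$, that each $\bar x$ near $x^*$ is an isolated point of its own fiber with $\|f(x)-f(\bar x)\| \geq c_{\bar x}\|x-\bar x\|$ nearby (a pointwise estimate, not a two-point one), so the local index $\mathrm{ind}(f,\bar x)$ is defined and equals the common sign $\sigma$; then additivity of the Brouwer degree gives $\sigma = \deg(f, B, y) = \sigma \cdot \#\left(f^{-1}(y) \cap B\right)$ for $y$ near $y^*$, forcing exactly one preimage. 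Your final step, transporting the $H$-differential to the inverse by left-multiplying by $M^{-1}$ and using the Lipschitz lower bound, is fine once invertibility and the two-point lower bound are actually established, but as written the proof of those facts is the missing content of the theorem.
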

	\noindent Note that when $u \in t(\theta)$, we have $\p(u, \theta) = \h(u) \in \partial \Hid$. If we define two sets
	\begin{align*}
	X_{\plus} &= \{(u, \theta): 0 \leq \theta < 2\pi, u > \widehat{t}(\theta)\},\\
	X_{\minus}	&= \{(u, \theta): 0 \leq \theta < 2\pi, u < \widecheck{t}(\theta)\},
	\end{align*}
	then $\p: X_{\plus} \rightarrow \Reals^2 \backslash \Hid$ and $\p: X_{\minus} \rightarrow \Reals^2 \backslash \Hid$ are both bijections (this follows from \cref{tangent lemma}). 
\begin{prop}
	For all $(x,y) \in \Reals^2 \backslash \Hid$, there exist continuous, locally Lipschitz inverse functions $(u_\pm, \theta_\pm) \in X_\pm$. These functions have an $H$-differential:
	\begin{align*}
	T_{(u_\pm, \theta_\pm)}(x,y) &= \left\{\left(\widecheck{M} \right)^{-1}, \left(\widehat{M}\right)^{-1} \right\}\\
	&= \left\{ \begin{pmatrix}
	\cos \theta_\pm		&	\sin \theta_\pm \vspace{3pt}\\ 
	\ds \frac{-\sin \theta_\pm}{u - \tch(\theta_\pm)}		&	\ds \frac{\cos \theta_\pm}{u - \tch(\theta_\pm)}
	\end{pmatrix}, \begin{pmatrix}
	\cos \theta_\pm		&	\sin \theta_\pm \vspace{3pt}\\ 
	\ds \frac{-\sin \theta_\pm}{u - \that(\theta_\pm)}		&	\ds \frac{\cos \theta_\pm}{u - \that(\theta_\pm)}
	\end{pmatrix} \right\}.
	\end{align*}
	
	\begin{proof}
		First we check that the conditions of \cref{IVT} are satisfied. Fix a point $(u, \theta) \in X_\pm$. 
			\begin{enumerate}
				\item We already showed that if $\p$ is differentiable then $J_{\p}(u, \theta) \in T_{\p}(u, \theta)$.
				\item Clearly the $H$-differential is compact, since it has only one or two elements. 
				\item The map $(u, \theta) \mapsto T_{\p}(u, \theta)$ is upper hemicontinuous, because if $(u_k, \theta_k) \rightarrow (u, \theta)$ then for any sequence of matrices $M_k \in T(u_k, \theta_k)$, if $M_k \rightarrow M$ then $M \in T(u, \theta)$.
				\item Each matrix in $T_{\p}(u, \theta)$ has determinant $u - \widehat{t}(\theta)$ or $u - \widecheck{t}(\theta)$. These are always positive for $(u, \theta) \in X_{\plus}$ and always negative for $(u, \theta) \in X_{\minus}$.
				\item We use the properties of topological degree from \cite{gowda2004inverse}. If $\p$ is differentiable at $(u, \theta) \in X_\pm$ then the topological index is $\deg \left(\p, X_\pm, (\theta, u) \right) = \pm 1$. Otherwise, it is still $\pm 1$ by the nearness property.
			\end{enumerate}
		So the conditions of \cref{IVT} are satisfied and the result follows.
	\end{proof}
\end{prop}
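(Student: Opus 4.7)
My plan is to apply Gowda's inverse function theorem (\cref{IVT}) directly to the map $\p$ restricted to $X_\pm$, using the $H$-differential $T_\p$ computed in the previous proposition. Global existence of a pointwise inverse on $\Reals^2 \setminus \Hid$ is already in hand: by \cref{tangent lemma}, every external point has exactly one clockwise and one anticlockwise supporting ray to $\Hid$, so $\p \colon X_\pm \to \Reals^2 \setminus \Hid$ is a bijection. The role of \cref{IVT} is to upgrade this pointwise inverse to a continuous, locally Lipschitz inverse and to read off its $H$-differential.

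I would then verify the five hypotheses of \cref{IVT} at an arbitrary fixed $(u, \theta) \in X_\pm$. Hypothesis (1) is immediate from the previous proposition: wherever $\p$ is differentiable we have $\tch(\theta) = \that(\theta)$, so the Jacobian of $\p$ coincides with both $\widecheck{M}$ and $\widehat{M}$. Hypothesis (2) is trivial since $T_\p$ has at most two elements. Hypothesis (3), upper hemicontinuity, follows from the monotonicity and almost-everywhere continuity of $t(\theta)$ established earlier: any limit of the values $\tch(\theta_k), \that(\theta_k)$ along a sequence $\theta_k \to \theta$ is trapped in $[\tch(\theta), \that(\theta)]$, forcing the limit matrix to lie in $\{\widecheck{M}, \widehat{M}\}$. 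Hypothesis (4) reduces to a direct determinant computation: $\det \widecheck{M} = u - \tch(\theta)$ and $\det \widehat{M} = u - \that(\theta)$, both strictly positive on $X_\plus$ and strictly negative on $X_\minus$ by definition of these sets.

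The main obstacle will be hypothesis (5), matching the topological index of $\p$ at $(u,\theta)$ with the common sign of those determinants. Where $\p$ is differentiable the index equals $\mathrm{sign}\,\det J_\p = \pm 1$ in the classical sense. At non-differentiable $(u, \theta)$ I would invoke the nearness property of the topological degree from \cite{gowda2004inverse}: because $t(\theta)$ is single-valued and differentiable almost everywhere, differentiable points are dense in $X_\pm$, so the degree at $(u,\theta)$ agrees with the degree at a nearby differentiable point and is therefore $\pm 1$ throughout $X_\pm$. With all five conditions verified, \cref{IVT} yields the continuous, locally Lipschitz inverse $(u_\pm, \theta_\pm)$ on a neighborhood of every point of $\Reals^2 \setminus \Hid$, together with the $H$-differential $\{\widecheck{M}^{-1}, \widehat{M}^{-1}\}$. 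A routine inversion of the two explicit $2 \times 2$ matrices $\widecheck{M}$ and $\widehat{M}$ then produces the formulae stated in the proposition.
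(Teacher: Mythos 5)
Your proposal is correct and follows essentially the same route as the paper: both verify the five hypotheses of Gowda's inverse function theorem (\cref{IVT}) for $\p$ on $X_\pm$ --- with the sign of the determinants and the topological index handled identically, the latter via the nearness property at non-differentiable points --- and then invert the two matrices $\widecheck{M}, \widehat{M}$ to obtain the stated $H$-differential. The only cosmetic difference is that you make the global bijectivity of $\p \colon X_\pm \to \Reals^2 \setminus \Hid$ (via \cref{tangent lemma}) an explicit step, which the paper states in the text immediately preceding the proposition.
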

\noindent So we have $\nabla u_\pm(x,y) = (\cos \theta_\pm, \sin \theta_\pm)$ for all $x, y \in \Reals^2 \backslash \Hid$. Furthermore, whenever $t(\theta_\pm)$ is single valued, we have $\ds \nabla \theta_\pm(x,y) = \frac{(- \sin \theta_\pm, \cos \theta_\pm)}{u_\pm - t(\theta)}$.

\subsection{Potential function}

We construct a potential function $\varphi(x,y)$ on $\Reals^2 \backslash \Hid$, the level curves of which will form the boundary of the required billiard. The value of $\varphi(x,y)$ represents the length of rope needed to wrap around $\Hid$ and the point $(x,y)$. The supporting lines of $\partial \Hid$ through $(x,y)$ will intersect $\partial \Hid$ at points $T_{\minus}$ and $T_{\plus}$ (if the line intersects at an interval, choose an arbitrary point from it to be $T_{\minus}$). Then $\varphi(x,y)$ is the sum of the distances from $(x,y)$ to each tangent point $T_{\plus}$, $T_{\minus}$, plus the arc length of $\partial \Hid$ between $T_{\plus}, T_{\minus}$ on the opposite side of $(x,y)$.

\begin{prop}
	The function $\varphi(x,y)$ is continuously differentiable, and its gradient bisects the angle between the two supporting lines through $(x,y)$.
\end{prop}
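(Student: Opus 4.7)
The plan is to express $\varphi$ locally in terms of the tangential coordinates $u_+, u_-$ from the previous proposition, and then differentiate. For $P \in \Reals^2 \setminus \Hid$, let $T_+ = \h(\that(\theta_+))$ and $T_- = \h(\tch(\theta_-))$ be the tangent points associated with the two coordinates. From the definition of $\p$ one reads off
\begin{equation*}
u_+(P) = \that(\theta_+) + \|P - T_+\|, \qquad u_-(P) = \tch(\theta_-) - \|P - T_-\|,
\end{equation*}
so that
\begin{equation*}
u_+(P) - u_-(P) = \bigl[\that(\theta_+) - \tch(\theta_-)\bigr] + \|P - T_+\| + \|P - T_-\|.
\end{equation*}
The bracketed quantity is, modulo $L$, precisely the arc length of $\partial \Hid$ from $T_-$ going anticlockwise to $T_+$, which is the arc on the \emph{far} side of $P$. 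Therefore, locally $\varphi(P) = u_+(P) - u_-(P) + kL$ for a locally constant $k \in \{0,1\}$.

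Next I invoke the previous proposition: $u_\pm$ is locally Lipschitz on $\Reals^2 \setminus \Hid$ with almost-everywhere classical gradient $(\cos\theta_\pm, \sin\theta_\pm)$. Since the two supporting rays from a point to a convex set depend continuously on the point (even when $\partial\Hid$ has corners, the tangent-ray angles $\theta_\pm$ vary continuously as $P$ moves), the map $P \mapsto (\cos\theta_\pm, \sin\theta_\pm)$ is continuous on $\Reals^2 \setminus \Hid$. A locally Lipschitz function whose almost-everywhere gradient coincides with a continuous function is in fact $C^1$, by integration along line segments and the fundamental theorem of calculus. Hence $u_\pm \in C^1(\Reals^2 \setminus \Hid)$, and combining with the local identity above gives $\varphi \in C^1(\Reals^2 \setminus \Hid)$ with
\begin{equation*}
\nabla \varphi(x,y) = \nabla u_+ - \nabla u_- = (\cos\theta_+, \sin\theta_+) - (\cos\theta_-, \sin\theta_-).
\end{equation*}

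Finally, $\nabla \varphi$ is the difference of two unit vectors $v_+ - v_-$, which is orthogonal to their sum $v_+ + v_-$. The two perpendicular angle bisectors of the pair of lines spanned by $v_\pm$ are spanned by $v_+ \pm v_-$, so $\nabla \varphi$ lies along one of them; it is the external bisector, pointing away from $\Hid$, which is consistent with $\varphi$ being increasing as $P$ moves away from $\Hid$.

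I expect the first step to be the main obstacle: one must carefully track which arc of $\partial\Hid$ is on the far side of $P$ and handle the jumps of $\that$ and $\tch$ across the supporting rays through $\h(0)$, in order to justify the local identity $\varphi = u_+ - u_- + kL$. Once this is set up, the $C^1$ regularity and the bisector property follow cleanly, using the $H$-differentiability machinery only through the previous proposition.
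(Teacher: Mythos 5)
Your proof is correct and follows essentially the same route as the paper: both reduce $\varphi$ to $u_+ - u_-$ plus a locally constant multiple of $L$ (the paper writes this as $\varphi = \varphi_\plus + \varphi_\minus$ with $\varphi_\plus = u_\plus$ and $\varphi_\minus = L - u_\minus$, splitting the rope at $\h(0)$ or $\h(t(\pi))$ to handle exactly the jump you flag as the main obstacle), then read off $\nabla\varphi = (\cos\theta_\plus,\sin\theta_\plus) - (\cos\theta_\minus,\sin\theta_\minus)$ and the bisector property from the difference of unit vectors. The only divergence is in how the regularity of $u_\pm$ is harvested: the paper takes $\nabla u_\pm = (\cos\theta_\pm,\sin\theta_\pm)$ \emph{everywhere} directly from the preceding proposition (both matrices in the $H$-differential share that first row), whereas you upgrade the almost-everywhere gradient to $C^1$ via the Lipschitz/continuous-gradient integration lemma --- a valid, slightly more self-contained variant of the same step.
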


\begin{proof}

For the case $y \geq 0$, we split the rope into two curves: one of length $\varphi_{\minus}(x,y) = L - u_{\minus}(x,y)$ running clockwise from $\h(0)$ through $T_{\minus}$ to $(x,y)$, and the other of length $\varphi_{\plus}(x,y) = u_{\plus}(x,y)$ running anti-clockwise from $\h(0)$ through $T_{\plus}$ to $(x,y)$. Choose arbitrary points $t_{\minus} \in t(\theta_{\minus})$ and $t_{\plus} \in t(\theta_{\plus})$. The potential function is 
\begin{align*}
\varphi(x,y)	&= \left\|(x,y) - T_{\plus} \right\| + \|(x,y) - T_{\minus}\| + \int\limits_{\partial \Hid[T_{\minus}, T_{\plus}]} dt\\
&= \left\|p(u_{\plus}, \theta_{\plus}) - \h(t(\theta_{\plus})) \right\| + \|p(u_{\minus}, \theta_{\minus}) - \h(t(\theta_{\minus}))\| + \left(L - |t_{\minus} - t_{\plus}| \right)\\
&= \left|\varphi_{\plus} - t_{\plus} \right| + \left|L - \varphi_{\minus} - t_{\minus} \right| + \left(L - (t_{\minus} - t_{\plus}) \right)\\
&= \varphi_{\plus} + \varphi_{\minus}.
\end{align*}
For the case $y < 0$, we split the rope at the point $\h(t(\pi))$, which has the largest $y$ component on $\Hid$. So the two parts have lengths $\varphi_{\plus} = u_{\plus}(x,y) - t(\pi)$ and $\varphi_{\minus} = t(\pi) - u_{\minus}(x,y)$.
Choose arbitrary points $t_{\minus} \in t(\theta_{\minus}(x,y))$ and $t_{\plus} \in t(\theta_{\plus}(x,y))$. The potential function is 
\begin{align*}
\varphi(x,y)	&= \|(x,y) - T_{\plus}\| + \|(x,y) - T_{\minus}\| + \int\limits_{\partial \Hid[T_{\minus}, T_{\plus}]} dt\\
				&= \left\|p(u_{\plus}, \theta_{\plus}) - \h(t(\theta_{\plus})) \right\| + \|p(u_{\minus}, \theta_{\minus}) - \h(t(\theta_{\minus}))\| + \left(t_{\plus} - t_{\minus} \right)\\
				&= \left|\varphi_{\plus} + t(\pi) - t_{\plus} \right| + \left|t(\pi) - \varphi_{\minus} - t_{\minus} \right| + \left(t_{\plus} - t_{\minus} \right)\\
				&= \varphi_{\plus} + \varphi_{\minus}.
\end{align*}
So $\varphi(x,y) = \varphi_{\plus}(x,y) + \varphi_{\minus}(x,y)$ for all $(x,y) \in \Reals^2 \backslash \Hid$. Although $\varphi_{\minus}$ and $\varphi_{\plus}$ are piecewise defined and not continuous at $y = 0$, their sum is clearly continuous. It is also continuously differentiable everywhere, with gradient
$$\nabla \varphi = \begin{pmatrix}
\cos \theta_{\plus} \\ \sin \theta_{\plus}
\end{pmatrix} - \begin{pmatrix}
\cos \theta_{\minus} \\ \sin \theta_{\minus}
\end{pmatrix}.$$
This clearly bisects the angle between the two supporting lines, which have directions $(\cos \theta_{\plus}, \sin \theta_{\plus})$ and $(- \cos \theta_{\minus}, -\sin \theta_{\minus})$. 
\end{proof}


\section{Proof of main theorem}

Let $\Hid$ be a convex set with perimeter $L$, let $\lambda > 0$, and let $\partial K_\lambda$ be the level curve $\varphi(x,y) = 2 \lambda + L$. We prove the main theorem in three separate propositions. 

\begin{prop}
	The level curve $K_\lambda$ satisfies 
	$$\|(x,y) - h\| \geq \lambda, \text{ for all } (x,y) \in \partial K_\lambda, h \in \Hid.$$
	\begin{proof}
		For a point $x,y$, choose $t_\pm \in t(\theta_\pm)$. Then for any $t^* \in (t_{\minus}, t_{\plus})$, using convexity of the hidden set and the triangle inequality, we have 
		\begin{align*}
			\varphi(x,y) &= \|(x,y) - T_{\plus}\| + \|(x,y) - T_{\minus}\| + \int\limits_{\partial \Hid[T_{\minus}, T_{\plus}]} ds\\
			& \leq \|(x,y) - \h(t_{\minus})\| + \|(x,y) - \h(t_{\plus})\|\\
			&+ L - \|\h(t_{\minus}) - \h(t^*)\| - \|\h(t_{\plus}) - \h(t^*)\|\\
			& \leq 2\|(x,y) - \h(t^*)\| + L.
		\end{align*}
		In particular, on the level curve $\varphi = 2 \lambda + L$, we have $\ds 2\lambda + L =  \varphi(x,y) \leq 2 \min_{h \in \Hid} \|(x, y) - h\| + L$ and the result follows. 
	\end{proof}
\end{prop}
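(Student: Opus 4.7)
The plan is to establish the pointwise upper bound
$$\varphi(x,y) \leq 2 \|(x,y) - h\| + L$$
for every $(x,y) \in \Reals^2 \backslash \Hid$ and every boundary point $h = \h(t^*)$ with $t^* \in (t_{\minus}, t_{\plus})$, i.e.\ $h$ on the near arc between the two tangent points through $(x,y)$. On the level curve $\partial K_\lambda$ we have $\varphi(x,y) = 2\lambda + L$, so rearranging this inequality immediately yields $\|(x,y) - h\| \geq \lambda$, which is the claim.

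To derive the upper bound, first I would fix $(x,y)$, pick $t_\pm \in t(\theta_\pm)$ so that the tangent points through $(x,y)$ are $T_\pm = \h(t_\pm)$, and choose any $h^* = \h(t^*)$ with $t^* \in (t_{\minus}, t_{\plus})$. By the construction of the potential function,
$$\varphi(x,y) = \|(x,y) - T_{\plus}\| + \|(x,y) - T_{\minus}\| + \int_{\partial \Hid[T_{\minus}, T_{\plus}]} ds,$$
where the integral is the far-arc length, equal to $L$ minus the near-arc length. Two applications of the triangle inequality at $h^*$ bound the straight-line terms: $\|(x,y) - T_{\pm}\| \leq \|(x,y) - h^*\| + \|h^* - T_{\pm}\|$. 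And since $h^*$ lies on the near arc between $T_{\minus}$ and $T_{\plus}$, the chord-vs-arc inequality applied to each of its two halves gives a lower bound on the near-arc length of $\|T_{\minus} - h^*\| + \|h^* - T_{\plus}\|$, hence an upper bound of $L - \|T_{\minus} - h^*\| - \|h^* - T_{\plus}\|$ on the far-arc length. Summing these three inequalities cancels the four chord terms cleanly and leaves exactly $\varphi(x,y) \leq 2\|(x,y) - h^*\| + L$.

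To extend the conclusion from near-arc boundary points to every $h \in \Hid$, I would invoke the standard convex-geometry fact that the nearest point in a convex body to an exterior point lies on its boundary, and in this configuration necessarily on the near arc. Hence $\min_{h \in \Hid}\|(x,y) - h\|$ is realised at a near-arc point and is therefore $\geq \lambda$ by the bound above, so $\|(x,y) - h\| \geq \lambda$ for every $h \in \Hid$. I do not anticipate any hard step: the substantive content is just the triangle inequality combined with the chord-length bound for convex curves; the only mild subtlety is that $t(\theta_\pm)$ may be set-valued when $\partial \Hid$ contains corners or line segments, which is handled by an arbitrary choice of representatives $t_\pm$, as is done throughout the preceding sections.
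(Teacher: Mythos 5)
Your proposal is correct and follows essentially the same route as the paper's proof: the same decomposition of $\varphi$, the same chord-versus-arc bound on the near arc (convexity), and the same triangle inequality at $\h(t^*)$, yielding $\varphi(x,y) \leq 2\|(x,y)-\h(t^*)\| + L$. Your closing observation that the minimiser of $\|(x,y)-h\|$ over $\Hid$ lies on the near arc is a detail the paper leaves implicit, but it is not a different argument.
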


\begin{prop}
	Each level curve is strictly convex.	
	\begin{proof}
		Let $\gamma(\tau)$ \paramise the boundary $\partial K_\lambda$ anticlockwise. The tangential angle is $\angle \gamma'(\tau) = \frac{1}{2}(\theta_{\plus}(\gamma(\tau)) + \theta_{\minus}(\gamma(\tau)))$. Each angle $\theta_{\minus}, \theta_{\plus}$ is nondecreasing in $\tau$, and at least one of them is increasing (otherwise the two tangent lines would be parallel). So $\angle \gamma'(\tau)$ is strictly increasing, therefore the curve $\varphi(x,y) = c$ is strictly convex. In fact we can calculate the curvature directly wherever it exists. The curvature of a level curve $\varphi(x,y) = c$ is 
		$$\kappa_\varphi(x,y) = \frac{\varphi_x^2 \varphi_{yy} - 2 \varphi_x \varphi_y \varphi_{xy} + \varphi_y^2 \varphi_{xx}}{(\varphi_x^2 + \varphi_y^2)^{3/2}},$$
		when the second derivatives exist. A simple but very long calculation shows that the curvature of $K_\lambda$ is equal to 
		$$\kappa_\varphi(x,y) = \frac{|u_{\plus} - t_{\plus}| + |u_{\minus} - t_{\minus}|}{2 |u_{\plus} - t_{\plus}| |u_{\minus} - t_{\minus}|} \left| \sin \left( \frac{\theta_{\plus} - \theta_{\minus}}{2}\right) \right|,$$
		whenever $\widehat{t}(\theta_\pm) = \widecheck{t}(\theta_\pm)$.
		This is always positive, because $\theta_{\plus}$ and $\theta_{\minus}$ cannot be equal unless $(x,y) \in \partial \Hid$. The curvature tends to zero as $\|(x,y)\|$ approaches infinity, and it approaches the curvature of $\partial \Hid$ at $h$ (if it exists) as $(x,y) \rightarrow h$. 
	\end{proof}
\end{prop}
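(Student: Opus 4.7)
The plan is to parameterize $\partial K_\lambda$ anticlockwise by $\gamma(\tau)$ and show that its tangential angle is a strictly increasing function of $\tau$, which is the standard characterization of strict convexity for a simple closed curve. The key input is the gradient formula from the preceding proposition: $\nabla\varphi$ bisects the angle between the two supporting lines at $(x,y)$.

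First I would extract the anticlockwise tangent direction of the level curve from the gradient formula by applying sum-to-product identities:
\[
\nabla\varphi = \bigl(\cos\theta_{\plus} - \cos\theta_{\minus},\,\sin\theta_{\plus} - \sin\theta_{\minus}\bigr) = 2\sin\tfrac{\theta_{\plus} - \theta_{\minus}}{2}\bigl(-\sin\tfrac{\theta_{\plus} + \theta_{\minus}}{2},\,\cos\tfrac{\theta_{\plus} + \theta_{\minus}}{2}\bigr).
\]
Since $\theta_{\plus}\neq\theta_{\minus}$ off $\Hid$, this is a nonzero outward normal with argument $\tfrac{1}{2}(\theta_{\plus}+\theta_{\minus})+\tfrac{\pi}{2}$, and rotating by $-\pi/2$ gives $\angle\gamma'(\tau)=\tfrac{1}{2}\bigl(\theta_{\plus}(\gamma(\tau))+\theta_{\minus}(\gamma(\tau))\bigr)$.

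Second I would establish strict monotonicity of this quantity. Each of $\theta_{\plus},\theta_{\minus}$ is nondecreasing along $\gamma$: as the external base point moves anticlockwise around $\Hid$, each of its two supporting rays to $\Hid$ (unique by \cref{tangent lemma}) rotates monotonically, which can be deduced from the monotonicity of the tangential angle map $\theta(t)$ for $\partial\Hid$ established earlier. For strictness, suppose both $\theta_{\plus}$ and $\theta_{\minus}$ were constant on some subarc $\Gamma\subset\gamma$: then the supporting lines $L_{\plus},L_{\minus}$ would be the same fixed pair at every point of $\Gamma$, so $\Gamma\subset L_{\plus}\cap L_{\minus}$; since $\theta_{\plus}\neq\theta_{\minus}$ forces $L_{\plus}\neq L_{\minus}$, this intersection is at most a single point, contradicting $\Gamma$ being an arc. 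Therefore $\tfrac{1}{2}(\theta_{\plus}+\theta_{\minus})$ is strictly increasing in $\tau$, and $\gamma$ is strictly convex.

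Where $\widehat{t}(\theta_\pm)=\widecheck{t}(\theta_\pm)$ and the necessary second derivatives exist, I would then substitute the gradients $\nabla u_\pm$ and $\nabla\theta_\pm$ from the previous section into the standard implicit-level-curve curvature formula and simplify via the same sum-to-product identity to obtain the closed-form expression stated in the proposition. The main obstacle is the low regularity of $\Hid$: since $t(\theta)$ can fail to be single-valued at flat edges and to be smooth at corners, the pointwise curvature computation need not make sense everywhere, so I deliberately put the strict convexity argument on the tangential-angle route rather than on curvature, since $\theta_{\plus}$ and $\theta_{\minus}$ remain continuous and monotonic along $\gamma$ no matter how rough $\partial\Hid$ is.
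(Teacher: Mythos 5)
Your proposal is correct and follows essentially the same route as the paper: identify the tangential angle of the level curve as $\tfrac{1}{2}(\theta_{\plus}+\theta_{\minus})$ via the bisecting-gradient formula, show each of $\theta_{\plus},\theta_{\minus}$ is nondecreasing along the curve with at least one strictly increasing, and conclude strict convexity, with the explicit curvature formula reserved for points where the second derivatives exist. Your subarc argument for strictness (a nontrivial arc cannot lie in the intersection of two distinct lines) is a slightly more explicit version of the paper's one-line justification, but the substance is identical.
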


\begin{prop}	
	Let $\Hid$ be a convex set and let $\K_\lambda$ be a billiard table with boundary $\varphi(x,y) = L + 2\lambda$. Then the phase space $\hat{\Omega}$ of the billiard flow inside $\K_\lambda$ is split into two disjoint subsets $\hat{\Omega} = \hat{\Omega}_1 \cup \hat{\Omega}_2$. Every trajectory in $\hat{\Omega}_1$ intersects $\Hid$ after every reflection, while every trajectory in $\hat{\Omega}_2$ never intersects $\Hid$. 
\end{prop}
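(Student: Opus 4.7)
The plan is to exploit the key geometric property of $\partial K_\lambda$ established in the previous proposition: at each $p \in \partial K_\lambda$, the inward normal $\nu$ bisects the angle between the two supporting rays from $p$ to $\Hid$. Combined with the reflection law, this will make the property ``the current chord meets $\Hid$'' invariant under the billiard map, and the splitting $\hat{\Omega} = \hat{\Omega}_1 \cup \hat{\Omega}_2$ will then follow by partitioning trajectories according to this property.

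First I would record, for each $p \in \partial K_\lambda$, the two unit directions $r_+ = -(\cos\theta_+, \sin\theta_+)$ and $r_- = (\cos\theta_-, \sin\theta_-)$ pointing from $p$ toward the tangent points $T_\pm \in \partial \Hid$. By \cref{tangent lemma}, these bound the open cone $C(p)$ consisting of exactly those unit vectors $v$ for which the ray $p + tv$, $t > 0$, meets $\Hid$. The gradient formula from the previous proposition gives
\[
-\nabla \varphi(p) = (\cos\theta_-, \sin\theta_-) - (\cos\theta_+, \sin\theta_+) = r_+ + r_-,
\]
so the inward unit normal $\nu$ is the normalised bisector of $C(p)$ and points into $C(p)$ toward $\Hid$.

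Next I would interpret the reflection law. A short calculation shows that $v^+ = v^- - 2\langle v^-, \nu\rangle \nu$ is equivalent to the statement that $v^+$ is the mirror image of $-v^-$ across the line spanned by $\nu$. Because $\nu$ is the angle bisector of the pair $(r_+, r_-)$, this reflection interchanges $r_+$ and $r_-$ and hence carries $C(p)$ onto itself and its exterior onto its exterior. Therefore $-v^- \in C(p)$ if and only if $v^+ \in C(p)$. Since $\Hid$ lies in the interior of the convex region $K_\lambda$, this is precisely the assertion that the chord preceding the reflection at $p$ meets $\Hid$ if and only if the chord following it meets $\Hid$.

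Finally I would define $\hat{\Omega}_1 = \{(q,v) \in \hat{\Omega} : \text{at least one chord of the orbit through } (q,v) \text{ meets } \Hid\}$ and $\hat{\Omega}_2 = \hat{\Omega} \setminus \hat{\Omega}_1$. The invariance above, propagated inductively along the orbit (using time reversibility of the billiard flow for the backwards direction), gives the two claimed properties. Grazing trajectories along the supporting rays $r_\pm$ are tangent to $\partial \Hid$ and form a measure-zero set of singularities; they can be assigned consistently to $\hat{\Omega}_1$ by interpreting tangential contact as ``meeting'' $\Hid$. The only real obstacle one might worry about is the possible non-uniqueness of $T_\pm$ when $\partial \Hid$ contains a line segment parallel to a supporting line, but the parameterisation by $\theta_\pm$ developed in Section~\ref{S: construction} already handles this, making $\nabla\varphi$ (and hence $\nu$ and $C(p)$) continuous and well-defined on all of $\partial K_\lambda$.
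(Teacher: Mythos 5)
Your proof is correct and follows essentially the same route as the paper's: both rest on the fact that $\nabla\varphi$ bisects the two supporting directions at each point of $\partial K_\lambda$, so that the billiard reflection exchanges the two tangent rays and therefore preserves the property that a chord meets $\Hid$. Your cone formulation, together with the explicit handling of grazing trajectories and non-unique tangent points, is somewhat more careful than the paper's angle-of-incidence phrasing, but the underlying argument is identical.
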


\begin{proof}
	Consider a billiard trajectory tangent to $\Hid$ at $\h(t_{\plus})$ and colliding with $\partial K$ at $(x,y)$. The normal vector to $\partial K$ at $(x,y)$ is $\nabla \varphi$, which bisects the vectors $(\cos \theta_{\minus}, \sin \theta_{\minus})$ and $(\cos \theta_{\plus}, \sin \theta_{\plus})$ at $(x,y)$. The angle of incidence is $\frac{\theta_{\minus} - \theta_{\plus}}{2}$. So the reflected trajectory must be tangent to $\Hid$ at $\h(t(\theta_{\minus}))$. Next consider a trajectory coming from inside $\Hid$ and colliding with $\partial K$ at $(x,y)$. This must have a smaller angle of incidence and reflection, so it will return to $\Hid$ after one reflection. Similarly a trajectory that does not intersect $\Hid$ before reflecting at $(x,y)$ will have a greater angle of incidence and reflection, so it will not intersect $\Hid$ after reflecting. Thus the phase space inside $\K_\lambda$ is split as required.
\end{proof}
This completes the proof of the main theorem.

%
%

\begin{proof}[Proof of \cref{cor1}]
	Let $\Hid$ be a potential hidden set. Let $\focusA, \focusB$ be any two points on the boundary with tangent lines $L_A, L_B$. Let $K_1 \subset \Hid$ share one side of the boundary with $\Hid$ between $\focusA$ and $\focusB$ (otherwise this component is arbitrary). If the intersection $L_A \cap L_B = P$ is a point on the opposite side of $K_1$ from $\Hid$, then let 
	$$\lambda_{\max} = \min\{\|A - P\|, \|B - P\|\}.$$
	If the intersection $L_A \cap L_B$ is a point on the same side of $K_1$ as $\Hid$, or if $L_A, L_B$ are parallel, then let $\lambda_{\max} = \infty$. Let $L$ be the arc length of $\partial \Hid$ from $A$ to $B$ (the part not overlapping $K_1$). Let $R_A, R_B$ be the half-planes on the opposite side of $\Hid$ from $L_A, L_B$ respectively. Let $R_H$ be the region bounded by $L_A, L_B$ and $\partial \Hid$, and let $R_0$ be the region bounded by $L_A, L_B$ and $\partial K_1$. Then for $(x,y) \in \Reals^2 \backslash (\Hid \cup R_0)$, let $T_{\minus} = A$ if $(x,y) \in R_A$ and otherwise choose $T_{\minus} \in \partial \Hid$ so that an anticlockwise supporting ray intersects $(x,y)$. Similarly, let $T_{\plus} = B$ if $(x,y) \in R_B$ and otherwise choose $T_{\plus} \in \partial \Hid$ so that an anticlockwise supporting ray from $T_{\plus}$ intersects $(x,y)$. Then define the potential function on $\Reals^2 \backslash \Hid$ by
	\begin{align*}
	\varphi(x,y)	&= \|(x,y) - T_{\minus}\| + \|(x,y) - T_{\plus}\| + \int\limits_{\partial \Hid[T_{\minus}, T_{\plus}]} ds.
	\end{align*}
	This is very similar to the original potential function, except for the altered tangent points. By modifying the construction in section \ref{S: construction} it is easy to see that the level curve $\varphi(x,y) = 2 \lambda + L$ splits the phase space around $\Hid$, everywhere except the region $R_0$. For any $\lambda \in (0,\lambda_{\max})$, the level curve intersects $L_A$ and $L_B$ orthogonally at $A', B'$ respectively. Extend the curve back around as in \cref{fig2}({\textsc{b}}) to form the boundary of $\K_\lambda$. Clearly any trajectory passing through $L_A$ between $A$ and $A'$ (or passing through $L_B$ between $B$ and $B'$) will never reach the hidden set. 
	
		
\end{proof}

\section{Remarks and future research}

\begin{remark}
	In \cref{main theorem}, if the hidden set $\Hid$ is a polygon with finitely many sides, then $\partial K_\lambda$ will be entirely composed of elliptical arcs.
\end{remark}

\begin{remark}
	For \cref{main theorem} and both corollaries, in the limit as $\lambda \rightarrow 0$, the billiard $\partial K_\lambda$ approaches $\partial \Hid$ itself. 
\end{remark}
\noindent The constructions presented in this paper are not unique, and some of the restrictions given can be relaxed.

\begin{figure}
	\centering
	\subcaptionbox{\label{fig:Triangle} The lower part of this billiard has been shifted inwards. The elliptical arcs are labelled by their focii, e.g. the arc $AB$ is part of an ellipse with focii $A, B$. In the limit as $A' \rightarrow A$ and $B' \rightarrow B$, the billiard approaches a Bunimovich mushroom.}
	{\includegraphics[width = \textwidth, clip = true, trim = 5.5cm 14.5cm 0cm 4cm]{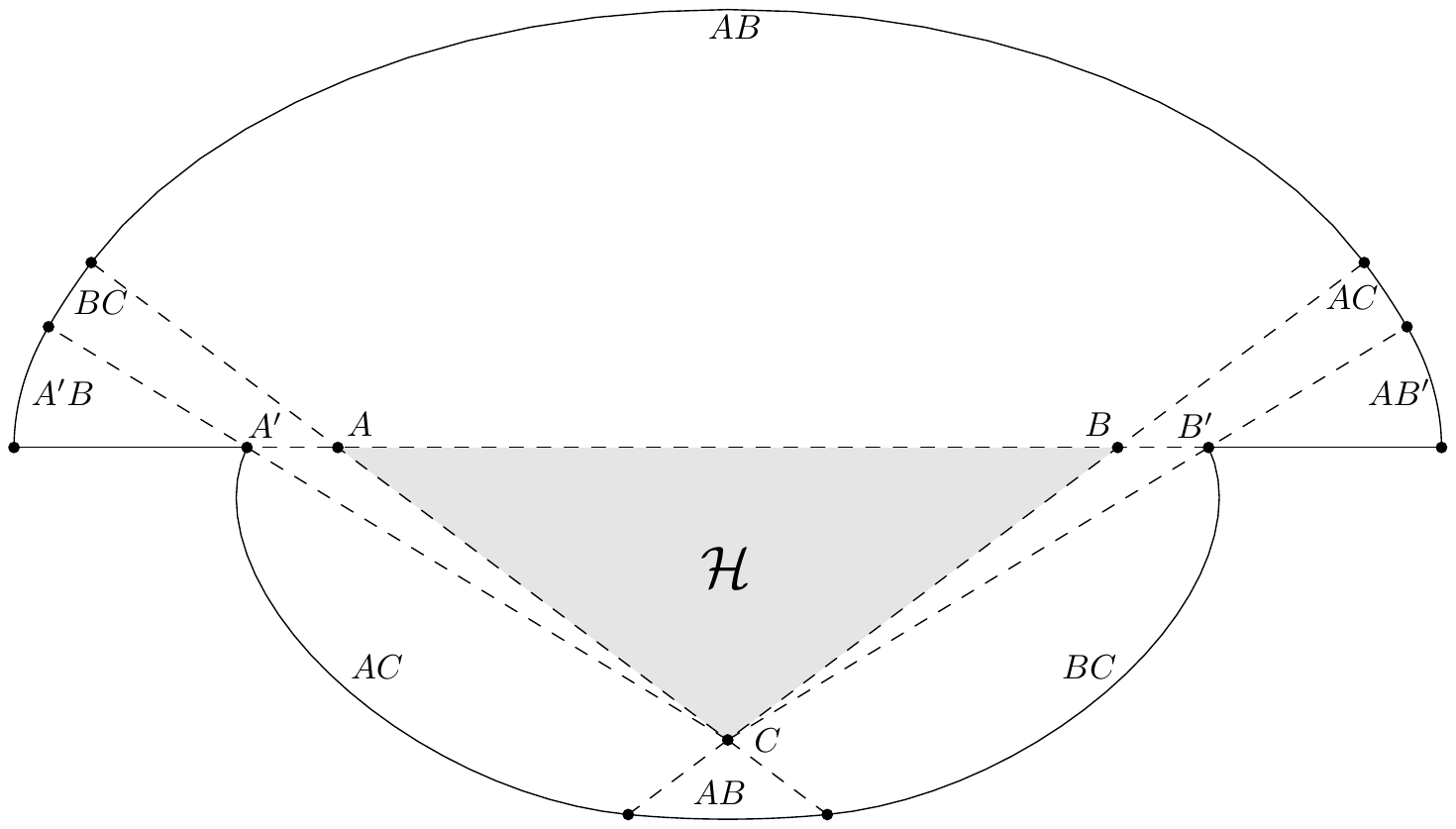}}
	\subcaptionbox{\label{fig:ParabolicLivshits} A parabolic billiard with a hidden set. The dotted lines are trajectories from the focus of the two parabolas. The free and trapped sets are shown at points $p, q$. }
	{\includegraphics[width = 0.45\textwidth]{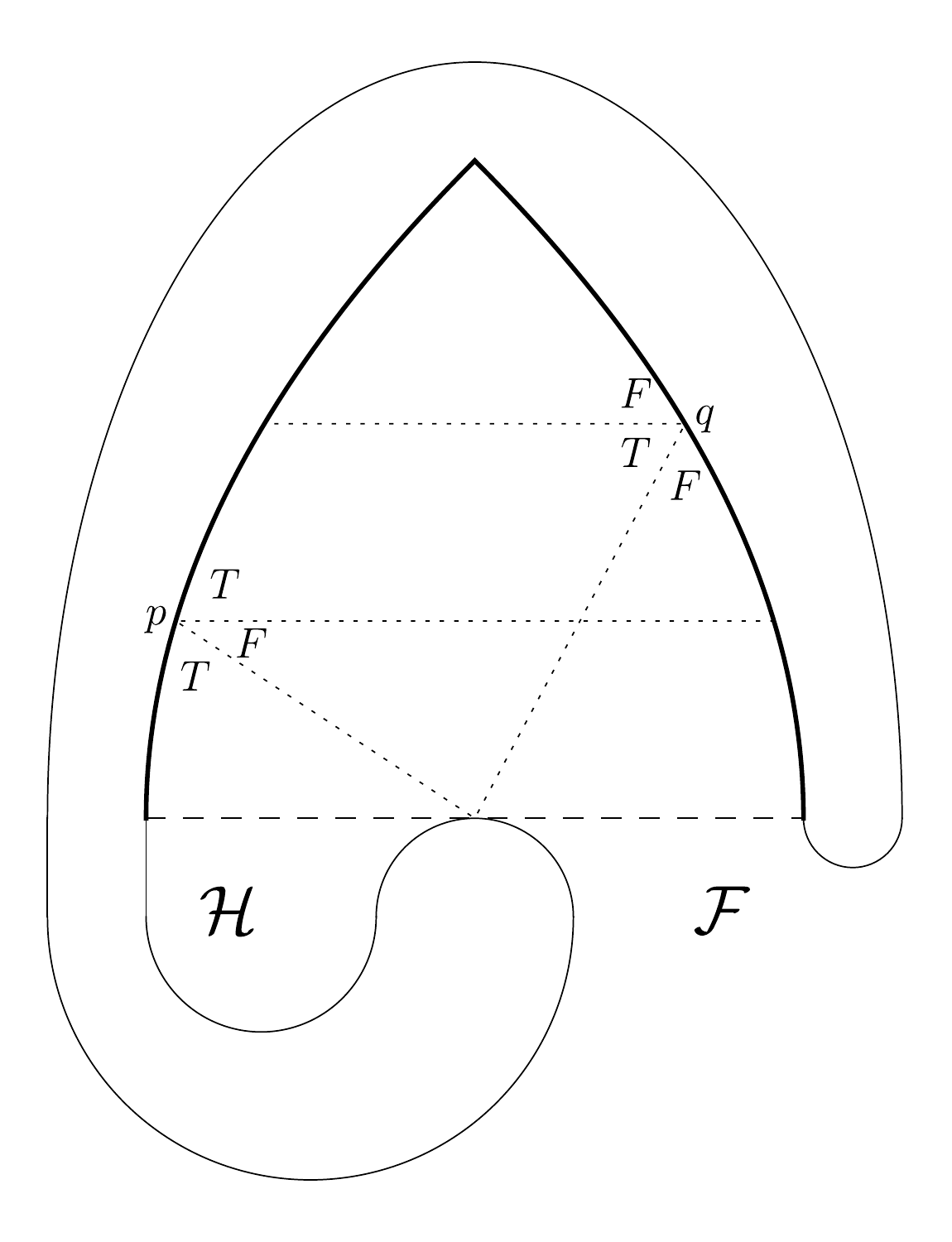}}
	\hspace{0.5cm}
	\subcaptionbox{\label{fig:ConcaveHidden} The concave part of the hidden set is covered by a thin billiard obstacle. Since the boundary is a polygon, the billiard $\partial \K_\lambda$ is a piecewise union of ellipses.}
	{\includegraphics[width = 0.4\textwidth, clip = true, trim = 6cm 16cm 6cm 4cm]{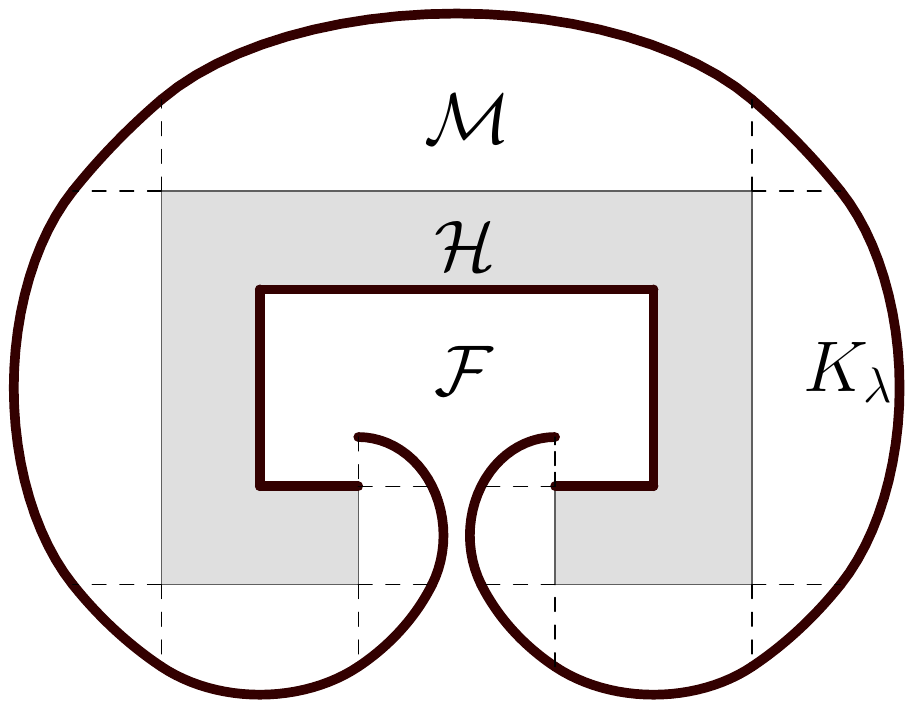}}
	\caption{\label{fig:MoreExamples}}
\end{figure}

\subsection{Shifting pieces of $K_\lambda$.}

By shifting parts of $\partial K_\lambda$ in and out, and filling in the resulting gaps with other curves, it is possible to create piecewise differentiable billiards with the same hidden set. The general method here will likely be very complicated, so we will only provide one example, \cref{fig:MoreExamples}(\textsc{a}), rather than going into detail. The original Bunimovich mushrooms can be constructed in this way: part of the billiard is shifted inwards until it touches the hidden set. 

\subsection{Constructions with two or more reflections} \label{Other constructions}

We have assumed that a trajectory leaving $\Hid$ will reflect exactly once and then return to $\Hid$. But there may be billiard systems where trajectories can reflect two or more times outside $\Hid$ before returning to $\Hid$. \cref{fig:MoreExamples}(\textsc{b}) shows one example using two parabolic curves with the same focus and directrix. Trajectories leaving $\Hid$ reflect at least twice before returning to $\Hid$, and they can never reach $\F$. There may be much more complicated examples with two or more reflections.

\subsection{Concave hidden sets}
	The above constructions can be extended to concave hidden sets, provided that certain concave parts of the boundary are covered by a billiard obstacle. \cref{fig:MoreExamples}(\textsc{c}) shows an example. We conjecture that this is possible for any set, although it may be difficult to say exactly which parts of the boundary must be covered and find bounds on $\lambda$ so that the billiard does not intersect itself. 





\bibliographystyle{amsplain}
\bibliography{Billiards}

\end{document}